\documentclass[12pt, a4paper, twosided, reqno]{amsart}
\usepackage{enumerate, cite}
\usepackage{hyperref}

\newtheorem{theorem}{Theorem}
\newtheorem{lemma}{Lemma}

\newtheorem{example}{Example}

\allowdisplaybreaks
\author[N. Gahlian ]{Nidhi Gahlian }

\address{nidhi gahlian; department of mathematics, university of delhi, delhi-110007, india.}
\email{nidhigahlyan81@gmail.com}

\thanks {Research work of the  author is supported by research fellowship from Department of Science and Technology(INSPIRE), New Delhi, India, IF-190674.}
\title[Meromorphic Solutions]{Existence and Nonexistence of  Solutions of Certain Type of Nonlinear Differential and  Differential-Difference Equations}
\subjclass[2020]{30D35, 34M05, 39A05}
\keywords {Differential Equation, Hyperorder, Meromorphic solutions, Nevanlinna theory, Order of a function}
\begin{document}
	\maketitle
	
	\begin{abstract}
		In this paper, we study the existence, nonexistence, and growth behaviour of solutions of certain nonlinear differential and differential–difference equations involving the term $f^{n}f'$. Using tools from Nevanlinna theory, we establish several nonexistence results for finite-order entire solutions of equations involving $ff'$. In particular, we describe all the possible entire solutions  of the type $f^{2}f'$ in which right hand side involves exponential polynomials. The results obtained by us  improve several known results in the literature and we have  them demonstrated through illustrative examples.
		
	\end{abstract}
	\section{\textbf{Introduction}}
The study of entire and meromorphic solutions of nonlinear differential equations has become an active and significant area of research in complex analysis due to its rich theoretical framework and wide range of applications. In $1964$, Hayman's extension \cite{hay} of the Tumura–Clunie theorem initiated extensive research on differential equations of Tumura–Clunie type, especially those represented by $f^n(z) + Q_d(z,f) = g(z) $, where  $Q_d(z,f)$ denotes a differential polynomial in $f$. Since then, numerous authors have investigated these equations under a variety of perturbations and structural assumptions, leading to a rich collection of results concerning the growth and value distribution of their solutions. In $2004$, Yang and Li \cite{yangcc} investigated the following nonlinear differential equation 
	 	\begin{equation*}
	 	4f^3+3f''=-\sin 3z,
	 \end{equation*} and proved that it admits exactly three non-constant entire solutions.

Since the term $-\sin 3z $ can be represented as a linear combination of exponential functions, it naturally motivates the investigation of differential equations whose forcing terms are composed of exponential functions. Inspired by this observation, considerable attention has been devoted to the study of nonlinear differential equations of the following form:

	\begin{equation}\label{..}
		f^n(z)+Q_d(z,f)=p_1(z)e^{\alpha_1 (z)}+p_2(z)e^{\alpha_2(z)},
	\end{equation}
	where $p_1(z), p_2(z)$ are rational functions, $\alpha_1(z)$, $\alpha_2(z)$ are polynomials, and $ Q_d(z,f)$ is a differential polynomial of degree $d$. 
	
	In $2006$, Li and Yang\cite{lip} established results concerning the existence of entire solutions of equation \eqref{..}. In $2011$,  Li \cite{li2} investigated the corresponding meromorphic solutions and identified their forms for specific forms of  $\alpha_1 (z)$ and  $ \alpha_2(z) $.  Building on these findings, in $2013$, Liao et al. \cite{liao} established exact meromorphic solutions of equation \eqref{..}.
	Furthermore, this line of research has been extended by replacing $f^n$ with $f^nf'$ in L.H.S. and R.H.S., by a single term $u(z)e^{v(z)}$ thereby broadening the scope and depth of investigation in this area.  In $2014$, Liao and Ye\cite{jl} gave the following theorem.
	
	\begin{theorem}\cite{jl}
		Let $Q_d(z,f)$ be a differential polynomial in $f(z)$ of degree $d$ with rational function
		coefficients. Suppose that $u(z)$ is a nonzero rational function and $v(z)$ is a nonconstant polynomial. If $d\leq n-1$ and the differential equation
		\begin{equation}
				f^nf' + Q_d(z,f )=u(z)e^{v(z)}
		\end{equation}
	admits a meromorphic solution $f$ with finitely many poles, then $f$ has the following form:
	$f(z)=s(z)e^{\frac{v(z)}{n+1}}$ and $Q_d(z,f)\equiv 0$,
		where $s(z)$ is a rational function with $s^n((n+1)s' + v's)=(n +1)u$.
		\end{theorem}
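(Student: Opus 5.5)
The plan is to reduce the equation to a linear first-order relation between $f^{n+1}$ and $e^{v}$, and then to use a Clunie-type argument to show that $Q_d(z,f)$ must vanish.

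\textbf{Step 1: $f$ has finite order and $T(r,f)$ is comparable with $r^{\deg v}$.} Write the equation as $\frac{1}{n+1}(f^{n+1})' = u e^{v} - Q_d(z,f)$. Since $f$ has finitely many poles and $d\le n-1<n$, the Clunie lemma gives $m(r,Q_d(z,f))=S(r,f)$, where we weight the terms of $Q_d$ by $f^{n}$. As $f$ has finitely many poles, also $T(r,Q_d)\le d\,T(r,f)+S(r,f)$. Comparing characteristics of both sides yields
\[
(n+1)T(r,f) \le T(r,e^v) + d\,T(r,f)+S(r,f).
\]
Hence $T(r,f)=O(r^{\deg v})$ outside an exceptional set, and conversely $T(r,e^v)=O(T(r,f))$. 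In particular $S(r,f)=S(r,e^v)$, and $f$ has finite order.

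\textbf{Step 2: eliminate $e^{v}$ and show $Q_d\equiv0$.} Differentiate the equation and subtract $v'$ times the equation. This gives
\[
f^{n-1}\bigl(nf'^2+ff''-v'ff'\bigr) = \bigl(\tfrac{u'}{u}\bigr)u e^{v}-\dots,
\]
and more precisely
\[
f^{n-1}\Phi = \Bigl(v'+\tfrac{u'}{u}\Bigr)Q_d - Q_d', \qquad \text{where } \Phi := nf'^2+ff''-\Bigl(v'+\tfrac{u'}{u}\Bigr)ff'.
\]
The right-hand side is a differential polynomial of degree at most $d\le n-1$, and $\Phi$ is quadratic. We apply the Clunie lemma (the form for $f^{n-1}P=Q$ with $\deg Q\le n-1$) to conclude $m(r,\Phi)=S(r,f)$. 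Since $f$ has finitely many poles, this gives $T(r,\Phi)=S(r,f)$, so $\Phi$ is a small function.

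Next we exploit the structure of $\Phi$. Writing $g=f^{n+1}$, one checks that $\Phi/f^2$ equals $\frac{1}{n+1}\bigl((g'/g)'+\ldots\bigr)$. In this step we aim to prove $\Phi\equiv0$. If $\Phi\not\equiv0$, then each zero of $f$ of multiplicity $k$ forces $\Phi$ to vanish to order at least $2k-2$, and $f^{n-1}\Phi$ vanishes to order at least $(n+1)k-2$. Combined with the equation, this would make $Q_d$ and hence $e^v$ controlled. Concretely, I will show that $N(r,1/f)=S(r,f)$: from $f^{n-1}\Phi=\ldots$ and $\Phi$ small, the zeros of $f$ are zeros of the degree-$\le n-1$ right-hand side, which is impossible to high order. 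Then $f=se^{h}$ with $s$ rational and $h$ a polynomial by Hadamard. Substituting into the equation and using Borel's lemma on the exponentials $e^{(n+1)h}$, the $e^{jh}$ with $j\le n-1$, and $e^{v}$, we get $(n+1)h=v$ and $Q_d\equiv0$.

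\textbf{Step 3: read off $s$.} With $f=se^{v/(n+1)}$ and $Q_d\equiv0$, the equation $f^nf'=ue^v$ gives $s^n\bigl(s'+\frac{v'}{n+1}s\bigr)=u$, which is exactly the stated relation.

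\textbf{Main obstacle.} The delicate point is Step 2, namely proving $N(r,1/f)=S(r,f)$ (equivalently, that $f$ has essentially no zeros). If the zero-counting argument fails, I would instead use the Borel-type decomposition directly. Since $f$ has finite order and $\Phi$ is small, the ODE $\Phi=\beta$ for $f$ can be integrated: setting $w=f^{n+1}$, it becomes a Riccati-type relation for $w'/w$. Matching its growth against $e^v$ should then force the form $f=se^{v/(n+1)}$.
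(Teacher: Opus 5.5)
There is a genuine gap in Step 2: you never exclude the case $\Phi\not\equiv 0$. (The paper only quotes this theorem from Liao and Ye and gives no proof, so there is no internal argument to lean on.)

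Your vanishing-order count gives nothing at simple zeros. If $f(z_0)=0\neq f'(z_0)$, then $\Phi(z_0)=nf'(z_0)^2\neq 0$. Combined with $N(r,1/\Phi)\le T(r,\Phi)+O(1)=S(r,f)$, the count only shows that multiple zeros of $f$ are negligible.

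The claim that zeros of $f$ are zeros of $\beta Q_d-Q_d'$ ``to an impossible order'' (with $\beta=v'+u'/u$) has no content. The relation $f^{n-1}\Phi=\beta Q_d-Q_d'$ is an identity, so the right side vanishes to exactly the order of the left side. Nothing prevents a degree-$(n-1)$ differential polynomial, e.g. one containing $f^{n-1}$, from doing so.

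Worse, if $\Phi\not\equiv0$, then
\[
\frac{1}{f^{2}}=\frac{1}{\Phi}\Bigl(n\Bigl(\frac{f'}{f}\Bigr)^{2}+\frac{f''}{f}-\beta\,\frac{f'}{f}\Bigr)
\]
gives $m(r,1/f)=S(r,f)$, i.e.\ $N(r,1/f)=T(r,f)+S(r,f)$. In this case $f$ has as many zeros as possible. So proving $N(r,1/f)=S(r,f)$ is not an intermediate step; it is the whole contradiction you still owe.

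Your fallback does not supply it either. With $y=(f^{n+1})'/f^{n+1}$ one has $y'+y^{2}-\beta y=(n+1)\Phi/f^{2}$, which is a Riccati equation in $y$ alone only when $\Phi\equiv0$. The paper's own arguments of this type (Theorems \ref{main2} and \ref{t6}) stop at the same point. In the non-degenerate case they either just record $2T(r,f)\le 3N(r,1/f)+S(r,f)$ or assume $N(r,1/f)=S(r,f)$.

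\textbf{What can be repaired.}

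If $d\le n-2$, apply Clunie's lemma to $f^{n-2}(f\Phi)=\beta Q_d-Q_d'$. This gives $m(r,f\Phi)=S(r,f)$, hence $T(r,f)\le T(r,f\Phi)+T(r,1/\Phi)=S(r,f)$, a contradiction. For $d=n-1$ this is unavailable, because the degree-$(n-1)$ part of $\beta Q_d-Q_d'$ can balance $f^{n-1}\Phi$. An additional idea is genuinely needed there.

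The case $\Phi\equiv 0$, which you never treat separately, follows from your own identity $f^{n-1}\Phi=(g''-\beta g')/(n+1)$ with $g=f^{n+1}$:
\begin{enumerate}
\item It gives $g'=Cue^{v}$, so $Q_d=(1-\frac{C}{n+1})ue^{v}$.
\item $C\neq n+1$ is impossible, since $nT(r,f)\le T(r,f^nf')+S(r,f)$ while $T(r,Q_d)\le (n-1)T(r,f)+S(r,f)$. Thus $Q_d\equiv 0$.
\item Every zero of $f$ is then a zero of $(f^{n+1})'=(n+1)ue^{v}$, hence of $u$, so there are finitely many.
\item Only now does Hadamard give $f=se^{h}$ with $s$ rational. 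From $N(r,1/f)=S(r,f)$ alone you would get a small, not necessarily rational, factor.
\end{enumerate}

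Finally, in Step 1 Clunie's lemma does not give $m(r,Q_d)=S(r,f)$, because the right-hand side $ue^{v}$ is not small. Also, $(n+1)T(r,f)$ on the left is unjustified, since the left side is $(f^{n+1})'/(n+1)$. The correct bound $nT(r,f)\le T(r,f^nf')+S(r,f)$ still yields $(n-d)T(r,f)\le T(r,e^{v})+S(r,f)$, so this part is only a minor fix.
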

	In $2017$, Zhang et al. \cite{zh}	investigated the following differential equation
	\begin{equation}\label{;}
			f^nf ' + Q_d(z,f )=p_1(z)e^{\alpha_1z}+p_2(z)e^{\alpha_2z},
	\end{equation}
	where  $Q_d(z,f)$ is a differential polynomial in $f(z)$ of degree $ d\leq  n-2$ with rational functions as its coefficients, $p_1(z), p_2(z)$ are non-vanishing rational functions and
$\alpha_1, \alpha_2$ are nonzero constants.
They  obtained the given result.
	\begin{theorem}\cite {yl}
		Let $n \geq 3$  be an integer and suppose that equation \eqref{;} admits a transcendental meromorphic solution $f(z)$ with finitely many poles
		then $\frac{\alpha_1}{\alpha_2}$ is a rational number and $f(z)$ must be of the form
		$f(z)=q(z)e^{P(z)}$, where $q(z)$ is a rational function and P$(z)$ is a nonconstant polynomial. Furthermore, only one of the following two cases holds:\\
		$(i)$ $\frac{\alpha_1}{\alpha_2}=1$, $Q_d(z,f) \equiv 0$ and $(n+1)P'(z)=\alpha_1= \alpha_2$;\\
		$(ii)$  $\frac{\alpha_1}{\alpha_2}=\frac{n+1}{k}$, $k$ is an integer, $1 \leq k \leq d$, $Q_d(z,f) \equiv p_2(z)e^{\alpha_2z}$ and
		$(n+1)P'(z)=\alpha_1$ or $\frac{\alpha_1}{\alpha_2}=\frac{k}{n+1}$, $k$ is an integer, $1 \leq k \leq d$, $Q_d(z,f) \equiv p_1(z)e^{\alpha_1z}$
	 and $(n +1)P'(z)=\alpha_2$.
		\end{theorem}
	The above theorem naturally leads to the investigation of several perturbative variants. One can see\cite{bo, yl} and references therein. \\
     It is natural to ask what can be said for the case when $n<3$ ? The following results provide a partial answer to this question.
	
		\begin{theorem}\label{main2}
		Suppose that $p_{1}, p_{2}, \gamma_{1},  \gamma_{2}$ are non-zero constants such that $\gamma_{1}\neq\gamma_{2}$, $U^*(z,f)$ is a linear difference polynomial in $f$ with small function of $f$ as its coefficients. If $f$ is a transcendental entire solution, with $\rho_2(f)<1$, of the difference equation
		\begin{equation}\label{mde}
			f^2(z)f'(z)+U^*(z,f)=p_1e^{\gamma_{1}z}+p_2e^{\gamma_{2}z},
		\end{equation}
		then $\rho(f)=1$ and any one of the following holds:
		\begin{enumerate}[(i)]
			\item $2T(r,f)\leq 3N\left(r,\frac{1}{f}\right)+S(r,f)$;
			\item $f(z)=c_1e^{\frac{\gamma_1 z}{3}}$, where $c_1=\left(\frac{3p_1}{\gamma_1}\right)^\frac{1}{3}$, $p_2=\sum_{j=0}^{m}d_jc_1e^{\tilde{\gamma}}$ and $\frac{\gamma_1}{\gamma_2}=3$;
			\item $f=c_2e^{\frac{\gamma_2z}{3}}$, where $c_2=\left(\frac{3p_2}{\gamma_2}\right)^\frac{1}{3}$, $p_1=\sum_{j=0}^{m}d_jc_2e^{\tilde{\gamma}}$ and $\frac{\gamma_2}{\gamma_1}=3$;
		\end{enumerate}
		where $c_1, c_2, \tilde{\gamma} \; \text{and} \; d_j $ $(j=0,1,2,...,m)$  are constants.
	\end{theorem}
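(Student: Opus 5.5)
The plan is to differentiate the equation to eliminate one exponential and then use Nevanlinna theory on the resulting identity. Write $U^*(z,f)=\sum_{j=0}^{m} d_j(z) f(z+c_j)$, where the coefficients $d_j$ are small functions of $f$.

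\textbf{Step 1: the order.} From \eqref{mde} and the difference analogue of the logarithmic derivative lemma (valid because $\rho_2(f)<1$) we get
\[
3T(r,f)=T(r,f^2f')+S(r,f)\le T\Big(r,p_1e^{\gamma_1z}+p_2e^{\gamma_2z}\Big)+T(r,f)+S(r,f).
\]
Hence $T(r,f)=O(r)$ and $\rho(f)\le 1$. Conversely, $T(r,e^{\gamma_iz})=O(T(r,f))$, and since $\gamma_1\ne\gamma_2$ the right-hand side is transcendental. This gives $\rho(f)=1$.

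\textbf{Step 2: elimination.} Set $P=f^2f'+U^*$. Differentiating \eqref{mde} and subtracting $\gamma_2$ times it gives
\[
P'-\gamma_2P=p_1(\gamma_1-\gamma_2)e^{\gamma_1z}.
\]
Differentiating once more and eliminating $e^{\gamma_1z}$ yields
\[
P''-(\gamma_1+\gamma_2)P'+\gamma_1\gamma_2P\equiv0 .
\]
Substituting $P$ and dividing by $f^3$, we obtain
\[
\frac{\gamma_1\gamma_2 f^2f'+\ldots}{f^3}=-\frac{L(U^*)}{f^3},
\]
where $L(U^*)=U^{*\prime\prime}-(\gamma_1+\gamma_2)U^{*\prime}+\gamma_1\gamma_2U^*$ is again a linear differential-difference polynomial. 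Now introduce
\[
\phi=\frac{(f^2f')''-(\gamma_1+\gamma_2)(f^2f')'+\gamma_1\gamma_2 f^2f'}{f^3}.
\]
Every term of the numerator is $f^3$ times a differential polynomial in $f'/f$. By the logarithmic derivative lemma, $m(r,\phi)=S(r,f)$. Also $\phi=-L(U^*)/f^3$.

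\textbf{Step 3: the dichotomy.}

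\emph{Case $\phi\not\equiv0$.} Then $f^3=-L(U^*)/\phi$. This gives
\[
3m\Big(r,\frac1f\Big)\le m\big(r,L(U^*)\big)+m\Big(r,\frac1\phi\Big)+S(r,f).
\]
Since $f$ is entire, $m\big(r,L(U^*)\big)\le m(r,f)+S(r,f)$, using the difference logarithmic derivative lemma on $f(z+c_j)/f(z)$ and its derivatives. Also $m(r,1/\phi)\le T(r,\phi)+O(1)=N(r,\phi)+S(r,f)$, and $N(r,\phi)\le 3\overline N(r,1/f)$-type terms coming only from the zeros of $f$.

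This is the main obstacle. The bound must be sharpened so that the counting lands exactly on $2T(r,f)\le 3N(r,1/f)+S(r,f)$. I would first try the identity $3T(r,f)=3N(r,1/f)+3m(r,1/f)$ and estimate $m(r,1/f)$ via $f^3\phi=-L(U^*)$. Failing that, I would use the original equation: $f^2f'=p_1e^{\gamma_1z}+p_2e^{\gamma_2z}-U^*$, combined with $T(r,e^{\gamma_iz})$ and the second main theorem for the exponential sum.

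\emph{Case $\phi\equiv0$.} Then $L(U^*)\equiv0$ and $P''-(\gamma_1+\gamma_2)P'+\gamma_1\gamma_2P\equiv0$ with $P=f^2f'$ alone. So $f^2f'=a e^{\gamma_1z}+be^{\gamma_2z}$. Writing $g=f^3/3$, we have $g'=ae^{\gamma_1z}+be^{\gamma_2z}$. Integrating, $f^3$ is a sum of two exponentials plus a constant, and since $f$ is entire, $f^3$ must be a perfect cube. The Borel lemma and the fact that $f$ is zero-free (or has only triple zeros, which gives case (i) otherwise) force one coefficient to vanish. This gives $f=c_1e^{\gamma_1z/3}$ with $c_1^3\gamma_1/3=p_1$.

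Plugging back into \eqref{mde} gives
\[
U^*=\sum_j d_jc_1e^{\gamma_1c_j/3}e^{\gamma_1z/3}=p_2e^{\gamma_2z}.
\]
This forces $\gamma_1=3\gamma_2$ and the stated form of $p_2$, which is (ii). The symmetric choice gives (iii).
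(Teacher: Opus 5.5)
\textbf{Gap: case $\phi\not\equiv0$.} This case is where conclusion (i) has to come from, and it is left unproved. From $f^3=-L(U^*)/\phi$ you write $3m(r,1/f)\le m(r,L(U^*))+m(r,1/\phi)+S(r,f)$. That identity bounds $m(r,f^3)$, not $m(r,1/f^3)$: the latter equals $m(r,\phi/L(U^*))$ and would need $m(r,1/L(U^*))$. Having mixed up the two proximity functions, you call the estimate ``the main obstacle'' and only list strategies, so (i) is never derived.

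The repair uses only what you already have. Since $f$ is entire, $T(r,f)=m(r,f)$, so
\[
3T(r,f)=m(r,f^3)\le m\big(r,L(U^*)\big)+m\Big(r,\frac1\phi\Big)+O(1)\le T(r,f)+N(r,\phi)+S(r,f).
\]
Here $m(r,L(U^*))\le m(r,f)+S(r,f)$, and $m(r,1/\phi)\le T(r,\phi)+O(1)=N(r,\phi)+S(r,f)$. Every term of the numerator of $\phi$ vanishes to order at least $3k-3$ at a $k$-fold zero of $f$. Hence $\phi$ has poles only at zeros of $f$, each of order at most $3$, so $N(r,\phi)\le 3\overline{N}(r,1/f)$ and $2T(r,f)\le 3\overline{N}(r,1/f)+S(r,f)$.

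The paper works instead with the entire function $\Phi=f^3\phi=-L(U^*)$ and applies the first main theorem to $1/f^3$. It uses $m(r,1/f^3)\le m(r,\Phi/f^3)+m(r,1/\Phi)$ together with $m(r,1/\Phi)\le m(r,\Phi)+O(1)\le m(r,\Phi/f)+m(r,f)$ and $m(r,\Phi/f)=S(r,f)$. That is your first fallback idea, carried out.

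\textbf{Smaller points.}

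In Step 1, $3T(r,f)=T(r,f^2f')+S(r,f)$ is not a general fact. Writing $f^3=f^2f'\cdot(f/f')$ and using $m(r,f/f')\le T(r,f'/f)+O(1)=\overline{N}(r,1/f)+S(r,f)$ gives only $2T(r,f)\le T(r,f^2f')+S(r,f)$. That is enough: it yields $T(r,f)\le T(r,p_1e^{\gamma_1z}+p_2e^{\gamma_2z})+S(r,f)=O(r)$. This lower-bound route is actually sounder than the paper's, which infers an upper bound on $T(r,f)$ from the upper estimate $T(r,f^2f'+U^*)\le 4T(r,f)+S(r,f)$.

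In the case $\phi\equiv0$, you assert that $f$ is zero-free without proof, and the remark about triple zeros giving (i) does not make sense there. It is easy to justify. Put $F=f^3=Ae^{\gamma_1z}+Be^{\gamma_2z}+C$; every zero $z_0$ of $F$ has order at least $3$. Then $F'(z_0)=F''(z_0)=0$ means $X=Ae^{\gamma_1z_0}$, $Y=Be^{\gamma_2z_0}$ satisfy $\gamma_1X+\gamma_2Y=0$ and $\gamma_1^2X+\gamma_2^2Y=0$. The determinant $\gamma_1\gamma_2(\gamma_2-\gamma_1)$ is nonzero, so $A=B=0$, which is impossible for transcendental $f$. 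Thus $f$ has no zeros, and Borel's lemma gives $f^3=Ae^{\gamma_1z}$ or $f^3=Be^{\gamma_2z}$.

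The paper gets zero-freeness from a local expansion of $\phi\equiv0$ at a zero of $f$. It then writes $f=qe^{\alpha z}$ and applies Borel's lemma to the original equation. Your integration of $f^2f'=ae^{\gamma_1z}+be^{\gamma_2z}$ is a clean alternative once this step is supplied.
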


	The  below mentioned  examples support our theorem.

	\begin{example}
		The function $f(z)=2e^z$ is the solution of the following difference equation
		\begin{equation}
			f^2f'+\frac{1}{4}f(z+\log 2)=8e^{3z}+e^z,
		\end{equation}
		where $\frac{\gamma_1}{3}=1=\gamma_2$ satisfies conclusion $(ii)$ of the theorem.
	\end{example}
	\begin{example}
		The function $f(z)=2e^{\frac{3z}{2}}$ is the solution of the following difference equation
		\begin{equation}
			f^2f'+\frac{1}{8}f(z+\log 4)=2e^{\frac{3z}{2}}+12e^{\frac{9z}{2}},
		\end{equation}
		where $\frac{\gamma_2}{3}=\frac{3}{2}=\gamma_1$, $c_2=2$ and
		\begin{equation*}
			p_1=d_1c_2e^{\tilde{\gamma}}=\frac{1}{8}2e^{\frac{3}{2}\log4}=2
		\end{equation*} satisfies conclusion $(iii)$ of the theorem.
	\end{example}
	
	\begin{example}
		The function $f(z)=e^z+1$ is the solution of the following difference equation
		\begin{equation}
			f^2f'+f(z)-f(z+\log 2)=e^{3z}+2e^{2z},
		\end{equation}
		where $U^*(z,f)=f(z)-f(z+\log2)$, satisfies conclusion $(i)$ of the theroem.
	\end{example}
	


		A linear differential polynomial $Q$  is an expression formed by finite sum of monomials  involving $f$ and its derivatives. It has the general form
		\begin{equation*}
			Q(z,f)=\sum_{ j=0}^{k}a_jf^{(j)},
		\end{equation*}
		where $a_j\; (j=0,1,...,k)$  are constants.
		However, if a linear differential polynomial is considered instead of a difference polynomial in Theroem \ref{main2}, then condition  $\rho_2(f)<1$  is no longer needed. Consequently, we have the following theorem for $n=2$.
		\begin{theorem}\label{main6}
			Suppose that $p_{1}, p_{2}, \gamma_{1},  \gamma_{2}$ are non-zero constants such that $\gamma_{1}\neq\gamma_{2}$, $Q(z,f)$ is a linear differential polynomial in $f$ with small function of $f$ as its coefficients. If $f$ is a transcendental entire solution of the differential  equation
			\begin{equation}\label{mde}
				f^2(z)f'(z)+Q(z,f)=p_1e^{\gamma_{1}z}+p_2e^{\gamma_{2}z},
			\end{equation}
			then $\rho(f)=1$ and any one of the following holds:
			\begin{enumerate}[(i)]
				\item $2T(r,f)\leq 3N\left(r,\frac{1}{f}\right)+S(r,f)$;
				\item $f(z)=c_1e^{\frac{\gamma_1 z}{3}}$, where $c_1=\left(\frac{3p_1}{\gamma_1}\right)^\frac{1}{3}$, $p_2=c_1\sum_{j=0}^{k}a_j\left(\frac{\gamma_1}{3}\right)^j$ and $\frac{\gamma_1}{\gamma_2}=3$;
				\item $f=c_2e^{\frac{\gamma_2z}{3}}$, where $c_2=\left(\frac{3p_2}{\gamma_2}\right)^\frac{1}{3}$, $p_1=c_2\sum_{j=0}^{k}a_j\left(\frac{\gamma_2}{3}\right)^j$ and $\frac{\gamma_2}{\gamma_1}=3$;
			\end{enumerate}
			where $c_1, c_2 \; \text{and} \; a_j $ $(j=0,1,2,...,k)$ are constants.
		\end{theorem}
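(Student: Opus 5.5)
We work with the equation $f^2f'+Q(z,f)=p_1e^{\gamma_1z}+p_2e^{\gamma_2z}$ and follow the standard route for equations with exponential right-hand sides. The plan has three stages: determine the growth of $f$, eliminate the exponentials to obtain a Nevanlinna estimate, and treat separately the degenerate case in which that estimate carries no information.

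\textbf{Order of $f$.} Since $f$ is entire and $Q$ is linear with small coefficients, the lemma on the logarithmic derivative gives
\[
3T(r,f)=T(r,f^2f')+S(r,f)\le T(r,p_1e^{\gamma_1z}+p_2e^{\gamma_2z})+T(r,Q)+S(r,f).
\]
Here $T(r,Q)\le T(r,f)+S(r,f)$, so $2T(r,f)\le O(r)+S(r,f)$. Hence $\rho(f)\le 1$. No hyperorder hypothesis is needed, because the logarithmic derivative lemma holds outside an exceptional set without any growth assumption; this is why $\rho_2(f)<1$ can be dropped here.

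Conversely, $\rho(f)<1$ is impossible. In that case $p_1e^{\gamma_1z}+p_2e^{\gamma_2z}$ would be a differential polynomial in $f$ of order less than $1$, whereas it has order $1$. So $\rho(f)=1$.

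\textbf{Eliminating the exponentials.} Write $F=f^2f'+Q$. Differentiate $F=p_1e^{\gamma_1z}+p_2e^{\gamma_2z}$ and eliminate both exponentials:
\[
F''-(\gamma_1+\gamma_2)F'+\gamma_1\gamma_2F=0.
\]
Substituting $F$ gives an identity of the form $f^2\,\Phi=R(z,f)$. Here
\[
\Phi=\frac{(f^2f')''-(\gamma_1+\gamma_2)(f^2f')'+\gamma_1\gamma_2 f^2f'}{f^2}
\]
is a differential polynomial, and $R=-(Q''-(\gamma_1+\gamma_2)Q'+\gamma_1\gamma_2Q)$ is linear in $f$.

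Suppose first that $\Phi\not\equiv 0$. Clunie's lemma applied to $f^2\Phi=R$ gives $m(r,\Phi)=S(r,f)$. Then, using $f^2=R/\Phi$,
\[
2m\!\left(r,\tfrac1f\right)\le m\!\left(r,\tfrac{R}{f}\right)+m\!\left(r,\tfrac{1}{\Phi}\right)\le T(r,\Phi)+S(r,f).
\]
Since $\Phi$ is built from $f'/f$, $f''/f$ and $f'''/f$, its poles lie only at zeros of $f$. Counting those poles with multiplicity should give $N(r,\Phi)\le N(r,1/f)+S(r,f)$, because each zero contributes a bounded-order pole. If that holds, then
\[
2T(r,f)=2m\!\left(r,\tfrac1f\right)+2N\!\left(r,\tfrac1f\right)\le 3N\!\left(r,\tfrac1f\right)+S(r,f),
\]
which is conclusion (i). The delicate part of this case is getting the pole count of $\Phi$ sharp enough to produce the constant $3$. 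If the simple bound fails, I would instead try to estimate $N(r,\Phi)$ by $\overline N(r,1/f)$ with a weight.

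\textbf{The case $\Phi\equiv 0$.} Then $g=f^2f'=(f^3)'/3$ satisfies $g''-(\gamma_1+\gamma_2)g'+\gamma_1\gamma_2g=0$. So $(f^3)'=c e^{\gamma_1 z}+d e^{\gamma_2 z}$, and integrating,
\[
f^3=\frac{3c}{\gamma_1}e^{\gamma_1z}+\frac{3d}{\gamma_2}e^{\gamma_2z}+e_0 .
\]
Because $f$ is entire, $f^3$ has only zeros of multiplicity divisible by $3$. A sum of exponentials with at least two nonzero terms has simple zeros except in special cases. I would use a Borel / second-main-theorem argument for the exponential sum, which forces $f^3$ to be a single exponential. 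Then $f=c_1e^{\gamma_1z/3}$ (or the analogous form with $\gamma_2$), and $f^2f'=\tfrac{\gamma_1}{3}c_1^3e^{\gamma_1z}$.

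Substituting back leaves $Q=p_2e^{\gamma_2z}$ (in the first sub-case). Now $Q(z,c_1e^{\gamma_1z/3})=c_1\sum a_j(\gamma_1/3)^je^{\gamma_1z/3}$, so by Borel's theorem on linear independence of exponentials we get $\gamma_2=\gamma_1/3$, $c_1^3=3p_1/\gamma_1$, and $p_2=c_1\sum_{j=0}^{k}a_j(\gamma_1/3)^j$. This is (ii); the symmetric sub-case gives (iii).

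I expect the main obstacle to be excluding the possibility that $f^3$ is a genuine two- or three-term exponential sum whose zeros all have multiplicity at least $3$. The same concern arises in the zero-counting step of the first case.
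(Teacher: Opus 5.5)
\textbf{The gap.} The case $\Phi\not\equiv0$, which is where conclusion (i) must come from, does not work as you wrote it. Two steps fail.
\begin{enumerate}
\item The inequality $2m(r,1/f)\le m(r,R/f)+m(r,1/\Phi)$ does not follow from $f^2=R/\Phi$. That identity says $f=(R/f)\cdot\Phi^{-1}$, which bounds $m(r,f)$, not $m(r,1/f)$. The identity that actually involves $1/f^2$ is $1/f^2=\Phi/R$. It only gives $2m(r,1/f)\le m(r,\Phi)+m(r,1/R)\le T(r,f)+S(r,f)$, i.e.\ $T(r,f)\le 2N(r,1/f)+S(r,f)$, which is weaker than (i).
\item The pole count $N(r,\Phi)\le N(r,1/f)+S(r,f)$ is false. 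Put $\phi:=f^2\Phi$. At a simple zero $z_0$ of $f$, every term of $\phi$ vanishes except $2f'^3$, so $\phi(z_0)=2f'(z_0)^3\neq0$ and $\Phi$ has a \emph{double} pole at $z_0$. Simple zeros therefore count twice in $N(r,\Phi)$; for $f=e^{2z}+e^{-2z}$ in the paper's example, all zeros are simple and $N(r,\Phi)=2N(r,1/f)$. No weighting of $\overline N(r,1/f)$ will produce the constant $3$ along this route.
\end{enumerate}
Also, $\Phi=\phi/f^2$ is not a differential polynomial, because of the term $2f'^3/f^2$, so Clunie's lemma does not apply as stated. It is $\Phi/f=\phi/f^3$ that is a polynomial in $f'/f,f''/f,f'''/f$.

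\textbf{The missing idea} is to work with the cube rather than the square. The function $\phi=(f^2f')''-(\gamma_1+\gamma_2)(f^2f')'+\gamma_1\gamma_2f^2f'$ is homogeneous of degree $3$ with constant coefficients, so $m(r,\phi/f^3)=S(r,f)$ by the logarithmic derivative lemma alone. Since $\phi=R$ with $R$ linear in $f$, you also have $T(r,\phi)=m(r,\phi)\le m(r,R/f)+m(r,f)\le T(r,f)+S(r,f)$. Hence
\[
3m\!\left(r,\tfrac1f\right)\le m\!\left(r,\tfrac{\phi}{f^3}\right)+m\!\left(r,\tfrac1\phi\right)\le T(r,f)+S(r,f),
\]
which is exactly $2T(r,f)\le 3N(r,1/f)+S(r,f)$. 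This is the paper's argument, and it needs no pole counting at all.

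\textbf{The rest is sound or easily repaired.}

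For $\Phi\equiv0$, you integrate explicitly to $f^3=Ae^{\gamma_1z}+Be^{\gamma_2z}+E$. This is a different and arguably cleaner route than the paper's, which shows $f$ has no zeros by comparing lowest-order terms of $\phi$ at a zero and then runs a Borel case analysis on $f=qe^{\alpha z}$. The obstacle you flag is not real:
\begin{enumerate}
\item The function $h=f^3$ solves $h'''-(\gamma_1+\gamma_2)h''+\gamma_1\gamma_2h'=0$.
\item If $h'(z_0)=h''(z_0)=0$, then $A=B=0$, because $\gamma_1\gamma_2(\gamma_2-\gamma_1)\neq0$. If also $h(z_0)=0$, then $E=0$.
\item So $h$ has no zero of multiplicity $\ge3$, and $f$ is zero-free.
\item Hence $f^3=e^{\beta z+\delta}$ with $\beta\neq0$, since $f$ is transcendental.
\item Borel's lemma then leaves only $f^3=Ae^{\gamma_1z}$ or $f^3=Be^{\gamma_2z}$.
\end{enumerate}

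In the order step, $3T(r,f)=T(r,f^2f')+S(r,f)$ is unjustified. Instead write $f^3=f^2f'\cdot(f/f')$ to get $2T(r,f)\le T(r,f^2f')+S(r,f)$. This still yields $T(r,f)\le O(r)+S(r,f)$, and hence $\rho(f)\le1$.
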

		
		The following examples illustrate Theroem \ref{main6}.
		
		\begin{example}
			The function $f(z)=e^{2z}+e^{-2z}$ is the solution of the following differential equation
			\begin{equation*}
				f^2f'-2f'=2e^{6z}-2e^{-6z}
			\end{equation*} satisfies conclusion $(i)$ of the theorem.
		\end{example}
		
		\begin{example}
			The function $f(z)=2e^{3z}$ is the solution of the following differential equation
			\begin{equation*}
				f^2f'+f+3f'=24e^{9z}+20e^{3z},
			\end{equation*} where $p_1=c_1^3*\frac{\gamma_1}{3}=8*3=24$, $\gamma_2=\frac{\gamma_1}{3}=3$ and
			
			$$p_2=c_1\sum_{j=0}^{k}a_j\left(\frac{\gamma_1}{3}\right)^j=2(1*3^0+3*3^1)=20$$
			satisfies conclusion $(ii)$ of the theorem.
		\end{example}
		For the case  $n=1$, the following theorem holds.

		\begin{theorem}\label{t6}
			Suppose that $p_{1}, p_{2}, \gamma_{1}, \gamma_{2}$ are non-zero constants such that $\gamma_{1}\neq\gamma_{2}$ and  $\frac{\gamma_2}{\gamma_1}\neq 2^{\pm 1}$. Let $Q(z,f)$ be a linear differential polynomial in $f$  whose coefficients are small function of $f$ then  the differential equation
			\begin{equation}\label{18}
				f(z)f'(z)+Q(z,f)=p_1e^{\gamma_{1}z}+p_2e^{\gamma_{2}z}
			\end{equation}
			admits no transcendental entire solution of finite order satisfying $N\left(r,\frac{1}{f}\right)=S(r,f)$.
		\end{theorem}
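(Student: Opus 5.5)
The plan is to use the scarcity of zeros to factor $f$ into a small entire function times $e^{P}$, turn \eqref{18} into an identity among exponentials with small coefficients, and let a Borel-type argument force $\gamma_2/\gamma_1\in\{2^{\pm1}\}$ or $f$ constant. Suppose, to the contrary, that $f$ is a transcendental entire solution of \eqref{18} of finite order with $N\left(r,\frac{1}{f}\right)=S(r,f)$.

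\textbf{Step 1: factorisation.} By the Hadamard factorization theorem we write $f=g\,e^{P}$. Here $g$ is the canonical product formed from the zeros of $f$ and $P$ is a polynomial. The hypothesis $N\left(r,\frac1f\right)=S(r,f)$, together with the finite order of $f$, should give $T(r,g)=S(r,f)$ and $T(r,f)\sim T(r,e^{P})$, so that $\deg P=\rho(f)\ge 1$. Every derivative $f^{(j)}=g_j e^{P}$ has a coefficient $g_j$ that is a differential polynomial in $g$ and $P'$, hence also small. Thus $Q(z,f)=\beta(z)e^{P(z)}$ with $\beta=\sum a_j g_j$ a small function of $f$. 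Moreover
\[
ff'=g\,(g'+P'g)\,e^{2P}.
\]
Equation \eqref{18} becomes
\begin{equation*}
g(g'+P'g)e^{2P}+\beta e^{P}-p_1e^{\gamma_1 z}-p_2e^{\gamma_2 z}=0 .
\end{equation*}

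\textbf{Step 2: Borel-type comparison.} We apply Borel's theorem on linear combinations of exponentials whose coefficients are small functions. The combination can vanish only if, after grouping exponents whose differences are constant, each group's coefficient sum vanishes. Since $\gamma_1\neq\gamma_2$, the terms $p_1e^{\gamma_1z}$ and $p_2e^{\gamma_2z}$ cannot cancel each other. Hence each of them must be absorbed by $e^{2P}$ or $e^{P}$, which forces $P(z)=az+b$ to be linear with $a\neq0$.

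\textbf{Step 3: case analysis on $a$.}
\begin{enumerate}[(a)]
\item If $2a=\gamma_1$, the remaining term $p_2e^{\gamma_2z}$ must be matched by $\beta e^{az}$. Then $a=\gamma_2$, i.e.\ $\gamma_1/\gamma_2=2$, which is excluded. Otherwise the coefficient of $e^{\gamma_2z}$ gives $p_2=0$, a contradiction.
\item The case $2a=\gamma_2$ is symmetric and yields $\gamma_2/\gamma_1=2$, again excluded.
\item If $2a\notin\{\gamma_1,\gamma_2\}$, then $e^{2P}$ is matched by nothing, so $g(g'+ag)\equiv0$. Since $g\not\equiv0$, this gives $g=Ce^{-az}$, so $f=Ce^{b}$ is constant, contradicting transcendence.
\end{enumerate}
Each case gives a contradiction, which proves the theorem.

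\textbf{Main obstacle.} The delicate point is Step 1: justifying that the Hadamard factor $g$ has $T(r,g)=S(r,f)$ from $N\left(r,\frac1f\right)=S(r,f)$ alone, so that the coefficients in Step 2 are genuinely small. One must also check they are small relative to $e^{\gamma_iz}$ as well as $e^{P}$, which is needed for Borel's lemma. The first approach I would try is the standard estimate of the canonical product by its counting function: $\log M(r,g)\le C\int$ of $N(t,0)$-type terms up to radius $2r$. Combined with the finite order of $f$, this bounds $T(r,g)$ outside an exceptional set. If that proves too weak, I would instead differentiate \eqref{18} and eliminate $e^{\gamma_1z}$ and $e^{\gamma_2z}$, as in the proof of Theorem \ref{main6}. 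That yields a differential identity in $f$ whose counting-function analysis gives $T(r,f)\le N\left(r,\frac1f\right)+S(r,f)$ unless $f$ has the pure exponential form treated in Step 3.
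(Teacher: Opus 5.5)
Your Step 1 is not merely delicate; as stated it is false. Finite order together with $N(r,1/f)=S(r,f)$ does not make the Hadamard cofactor small. The function $f=1/\Gamma$ has order $1$, $N(r,1/f)\sim r$ and $T(r,f)\sim (r\log r)/\pi$, so $N(r,1/f)=S(r,f)$. Yet its canonical product $z\prod_{n\ge 1}(1+z/n)e^{-z/n}$ differs from $f$ only by a factor $e^{cz}$, so its characteristic is $\sim T(r,f)$.

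The estimate you intend to use fails in genus one, which is the relevant case here since $\rho(f)=1$. For genus one, $\log|g(z)|$ contains the term $\mathrm{Re}\bigl(z\sum_{|a_n|\le 2r}a_n^{-1}\bigr)$ (Lindelöf's phenomenon), and $N(2r,0)$ does not control it. The equation does give you $T(r,f)=O(r)$, from $2m(r,f)\le m(r,ff')+m(r,f/f')$ and $m(r,f/f')\le \overline{N}(r,1/f)+S(r,f)$. Even this does not rescue Step 1. If $n(r)=o(r)$ and $s(r)=\sum_{|a_n|\le r}a_n^{-1}$ is bounded but oscillates slowly, then $T(r,ge^{-cz})=\frac{r}{\pi}|s(2r)-c|+o(r)$, which is not $o(r)$ for any constant $c$. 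So a factorization $f=ge^{P}$ with $T(r,g)=S(r,f)$ need not exist. Steps 2 and 3 rely on small coefficients in Borel's lemma, so they have nothing to stand on.

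The paper avoids factorization in the main case. It differentiates \eqref{18} twice and eliminates $e^{\gamma_1z}$ and $e^{\gamma_2z}$, which amounts to applying $(D-\gamma_1)(D-\gamma_2)$ with $D=d/dz$. This gives $\psi=K$, where $\psi$ is homogeneous of degree $2$ in $f$ and its derivatives, and $K$ is linear in $f$ with small coefficients. If $\psi\not\equiv 0$, then $m(r,\psi/f^2)=S(r,f)$ and $m(r,\psi/f)=m(r,K/f)=S(r,f)$. The first main theorem then gives $2T(r,f)\le m(r,1/\psi)+2N(r,1/f)+S(r,f)\le T(r,f)+S(r,f)$, a contradiction. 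Only for $\psi\equiv 0$ does the paper pass to $f=we^{lz}$ (by arguing that $f$ then has finitely many zeros) and run the Borel case analysis that matches your Step 3.

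Your fallback points to this route but misstates the dichotomy. $\psi\equiv 0$ does not by itself make $f$ a pure exponential, and your proposal leaves this step open. One way to close it is as follows.
\begin{itemize}
\item $\psi\equiv 0$ means $(D-\gamma_1)(D-\gamma_2)(ff')\equiv 0$, so $f^2=Ae^{\gamma_1z}+Be^{\gamma_2z}+C$.
\item Since $T(r,f'/f)=\overline{N}(r,1/f)+S(r,f)=S(r,f)$, the identity $(f^2)'=2\frac{f'}{f}f^2$ reads $A\bigl(\gamma_1-2\frac{f'}{f}\bigr)e^{\gamma_1z}+B\bigl(\gamma_2-2\frac{f'}{f}\bigr)e^{\gamma_2z}-2C\frac{f'}{f}=0$.
\item Borel's lemma then forces $f^2=Ae^{\gamma_1z}$ or $f^2=Be^{\gamma_2z}$, so $f=ce^{\gamma_iz/2}$.
\item Your Step 3 with $a=\gamma_i/2$ then gives $\gamma_2/\gamma_1=2^{\pm1}$, which is excluded.
\end{itemize}
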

		
		The examples provided below highlight the importance of the conditions assumed in the hypothesis of Theorem \ref{t6}.
		
		\begin{example}
			The function $f(z)=1+\sqrt{3e^z+e^{3z}}$ is the solution of the following differential equation
			\begin{equation}
				f(z)f'(z)+f'(z)=\frac{3}{2}e^z+\frac{3}{2}e^{3z}.
			\end{equation} 
			But $N\left(r,\frac{1}{f}\right)\neq S(r,f)$ for this function.
		\end{example}

		\begin{example}
			The function $f(z)=e^{2z}$ is the solution of the following differential equation
			\begin{equation}
				f(z)f'(z)+f'(z)=2e^4z+2e^{2z}.
			\end{equation} 
			Here $\frac{\gamma_2}{\gamma_1}=\frac{1}{2}$, which  contradicts the hypothesis as $\frac{\gamma_2}{\gamma_1}\neq 2^{\pm{1}}$.
		\end{example}
		In this sequence, a natural question arises  that what will happen if R.H.S.  of equation  \eqref{;} consists of three terms instead of two terms?  We give a positive answer to this question via Theorem\ref{main}.
	\begin{theorem}\label{main}
		Let $n\geq 3$ and $P(z,f)$ be a linear differential polynomial in $f$ and its derivatives whose coefficients are rational functions. Assume that  $P_{j}(z)$ $(j=1,2,3)$ are non-zero rational functions and $\gamma_{j}(z)$ $(j=1,2,3)$   are non-constant polynomials. Also, $\gamma_{1}^{'}(z),\gamma_{2}^{'}(z)$ and $\gamma_{3}^{'}(z)$ are distinct to each other. If $f$ is a transcendental meromorphic solution of the differential equation
		\begin{equation}\label{dedeg2}
			f^{n}(z)f'(z)+P(z,f)=P_{1}(z)e^{\gamma_{1}(z)}+P_{2}(z)e^{\gamma_{2}(z)}+P_{3}(z)e^{\gamma_{3}(z)}
		\end{equation}
		such that $f(z)$ has finitely many poles, then $f$ is of finite order. Moreover, $f(z)$ cannot be represented in the form $f(z)=w(z)e^{t(z)}$, where $w(z)$ is a non-zero rational function and $t(z)$ is a non constant polynomial.
	\end{theorem}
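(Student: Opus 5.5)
We establish finite order first, and then rule out the special form by comparing exponential terms.

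\emph{Finite order.} Write $F=f^nf'$, so that $F=P_1e^{\gamma_1}+P_2e^{\gamma_2}+P_3e^{\gamma_3}-P(z,f)$. Since $f$ has finitely many poles and the coefficients are rational, we get
\[
(n+1)T(r,f)=T(r,f^{n+1})+O(\log r)\le T(r,f^nf')+T\!\left(r,\tfrac{f}{f'}\right)+O(\log r).
\]
We bound $T(r,f^nf')$ by $T(r,P(z,f))+O(r^{d})$, where $d=\max\deg\gamma_j$. Since $P$ is linear, $T(r,P(z,f))\le T(r,f)+N$-terms $+S(r,f)$, and the $N$-terms are $O(\log r)$ because there are only finitely many poles. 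Also $T(r,f/f')\le T(r,f)+T(r,f'/f)$.

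This crude bound is too weak, so we use a Clunie-type argument instead. We have $m(r,f^nf')\le m(r,\text{RHS})+m(r,P(z,f))$ and $m(r,P(z,f))\le m(r,f)+S(r,f)$. Then
\[
(n+1)m(r,f)\le m(r,f^nf')+m\!\left(r,\tfrac{f}{f'}\right)\le m(r,f)+2T(r,f)\,\text{(crude)}+O(r^d),
\]
which is still crude. A better route is to note that $N(r,1/f')$ may be large; so instead use $nT(r,f)=m(r,f^n)+O(\log r)$ together with $f^n=(\text{RHS}-P)/f'$, and
\[
m\!\left(r,\tfrac{P(z,f)}{f'}\right)\le m\!\left(r,\tfrac{f}{f'}\right)+S(r,f).
\]
The cleanest way to get finite order is Clunie's lemma applied to $f^n\cdot f'=\text{RHS}-P$: it gives $m(r,f')=O(r^d)+S(r,f)$ and $m(r,f'f)=O(r^d)+S(r,f)$, whence $T(r,f)\le T(r,ff')+O(\log r)$ gives finite order. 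The main technical point is controlling the exceptional set in $S(r,f)$; we handle it with the standard lemma (Gol'dberg--Grinshtein) that an inequality $T(r,f)\le O(r^{d})$ outside a set of finite measure implies $\rho(f)\le d$.

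\emph{Nonexistence of the form $we^t$.} Suppose $f=we^{t}$. Then $f^nf'=w^n(w'+wt')e^{(n+1)t}$, and $P(z,f)=R(z)e^{t}$ with $R$ rational, which may be zero. The equation becomes
\[
A e^{(n+1)t}+Re^{t}=P_1e^{\gamma_1}+P_2e^{\gamma_2}+P_3e^{\gamma_3},
\]
with $A=w^n(w'+wt')\not\equiv0$; indeed $w'+wt'\equiv0$ would force $we^t$ to be constant, which is impossible since $t$ is nonconstant. By Borel's theorem on linear combinations of exponentials of polynomials with rational coefficients (after moving all terms to one side and grouping exponents that differ by constants), every exponent on the right must be matched, up to an additive constant, by one of the at most two exponents $(n+1)t$ and $t$ on the left. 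But $\gamma_1',\gamma_2',\gamma_3'$ are pairwise distinct, so no two of the $\gamma_j$ differ by a constant. Hence three mutually non-equivalent exponential terms appear on the right with nonzero coefficients, while the left has at most two classes. The identity is therefore impossible: some $P_je^{\gamma_j}$ has nothing to cancel against, and Borel's theorem then forces $P_j\equiv0$, a contradiction. The first part (the finite-order bookkeeping with the exceptional set) is the main obstacle; the second part is a routine application of Borel's theorem.
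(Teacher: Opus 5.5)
Your exclusion of $f=we^{t}$ is the paper's argument: substitute, then apply Borel's lemma. Your uniform count is cleaner than the paper's four-case split and is correct. It sets the three pairwise inequivalent exponents $\gamma_1,\gamma_2,\gamma_3$ against at most two classes, $t$ and $(n+1)t$, on the left. The paper's proof and examples allow $P$ a free term $b_0(z)$, in which case $P(z,we^{t})=Re^{t}+b_0$. That only adds the class of the exponent $0$, which no non-constant $\gamma_j$ can belong to, so nothing changes.

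For finite order, the estimate you discarded as ``too weak'' in fact works, and it is essentially the paper's proof. Since $f$ has finitely many poles, $T(r,f/f')=T(r,f'/f)+O(1)\le \overline{N}(r,1/f)+O(\log r)+S(r,f)\le T(r,f)+S(r,f)$. Even your cruder bound $T(r,f/f')\le 2T(r,f)+S(r,f)$ gives $(n+1)T(r,f)\le 3T(r,f)+O(r^{d})+S(r,f)$, that is, $(n-2)T(r,f)\le O(r^{d})+S(r,f)$. The paper obtains the same inequality from $nT(r,f)\le T(r,1/f')+T(r,\mathrm{RHS})+T(r,P)\le 2T(r,f)+O(r^{d})+S(r,f)$, and $n\ge 3$ finishes the argument. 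Your second ``still crude'' bound also already suffices.

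The Clunie route you finally adopt is a legitimate alternative, but two steps need justification.
\begin{enumerate}
\item The standard Clunie lemma assumes coefficients with $m(r,\cdot)=S(r,f)$, and the exponential sum on the right is not known to satisfy this. You must use the form $m(r,\cdot)\le\sum m(r,\text{coefficients})+S(r,f)$ that its proof actually yields, or split the circle into $|f|\ge 1$ and $|f|<1$ by hand.
\item The inequality $T(r,f)\le T(r,ff')+O(\log r)$ is asserted without proof. What is readily available is $2T(r,f)\le T(r,ff')+T(r,f/f')+O(1)\le T(r,ff')+T(r,f)+S(r,f)$, i.e. $T(r,f)\le T(r,ff')+S(r,f)$, and that suffices.
\end{enumerate}
With these repairs it reaches the same conclusion with heavier machinery and no real gain. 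In either route the exceptional set is handled by the usual lemma: replace $r$ by $\alpha r$ with $\alpha>1$.
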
	
	Theorem \ref{main} is demonstrated using following examples.
	\begin{example}
		The function $f(z)=\frac{1}{ze^z}+2$ is the solution of the following differential equation
		\begin{equation*}
			f^3f'-f'=-\left(\frac{1}{z^4}+\frac{1}{z^5}\right) e^{-4z}-\frac{3(z^2+1)}{z^5}e^{-3z}-\frac{3(z^2+1)}{z^4}e^{-2z}.
		\end{equation*}
	\end{example}

	-
	\begin{example}
		The function $f(z)=e^z+z$ is the solution of the following differential equation
		\begin{equation*}
			f^3f'-z^3f-3zf-3z^2=e^{4z}+(3z+1)e^{3z}+(3z^2+3z)e^{2z}.
		\end{equation*}
	\end{example}
	\begin{theorem}
		Let $n\geq 3$, $t\geq 0$ and $m\geq 1$ be integers, $n\geq m$ and $P(z,f,f',...,f^{(t)})$ be differential polynomial in $f(z)$ of degree $d\leq n$ with small functions of $f(z)$ as its coefficients. Suppose that $P_i$ and $\alpha_i$ are nonzero constants for $i=1,2,...,m,$ and $|\alpha_1|>|\alpha_2|>\cdots>|\alpha_m|$. If $f(z)$ is a meromorphic solution of the differential equation
		\begin{equation}
			f^nf'+P(z,f,f',...,f^{(t)}) = P_1e^{\alpha_1z}+P_2e^{\alpha_2z}+...+ P_me^{\alpha_m z},
		\end{equation}
		then $f(z)=q_1e^{\frac{\alpha_1z}{n+1}}$, where $q_1$ is a nonzero constant such that $q_1^{n+1}=\frac{(n+1)P_1}{\alpha_1}$ and $\alpha_1, \alpha_2, ..., \alpha_m $ are in one line.
	\end{theorem}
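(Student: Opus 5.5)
The approach follows the standard Clunie-type argument. Write the equation as $f^nf'+P=F$, where $F(z)=\sum_{i=1}^m P_ie^{\alpha_iz}$. The implicit hypotheses are that $f$ is transcendental and $F$ is small with respect to $f$ only in the sense that $T(r,F)=O(r)$.

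\textbf{Step 1: growth of $f$.} From the equation, $(n+1)T(r,f)\le T(r,f^nf')+S(r,f)\le T(r,F)+T(r,P)+S(r,f)$, and $T(r,P)\le dT(r,f)+N$-terms. Poles of $f$ force poles of $f^nf'$ of order strictly larger than those of $P$ (since $d\le n$). The right-hand side is entire, so $f$ is entire, and hence $T(r,f)=O(r)$, i.e. $\rho(f)\le1$.

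\textbf{Step 2: derivative trick to kill the leading exponential.} Differentiate the equation and subtract $\alpha_1$ times it:
\[
f^{n-1}\bigl(nf'^2+ff''-\alpha_1 ff'\bigr)+P'-\alpha_1P=\sum_{i=2}^m P_i(\alpha_i-\alpha_1)e^{\alpha_iz}.
\]
Set $\varphi=nf'^2+ff''-\alpha_1ff'$. Since $\deg(P'-\alpha_1P)\le n$, Clunie's lemma, applied after also comparing with $f^nf'$ (or using the standard estimate $m(r,\varphi)$ via $\varphi=f^{-(n-1)}(\cdots)$), yields $T(r,\varphi)=S(r,f)$ \emph{provided} the right-hand side is small with respect to $f$.

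This is the main obstacle. The right-hand side consists of exponentials of the same order as $f$. My plan is to handle it by induction on $m$: multiply successively by operators $(D-\alpha_i)$ to annihilate all of $e^{\alpha_2z},\dots,e^{\alpha_mz}$. This gives $f^{n-m+1}\Phi+\tilde P=0$, where $\Phi$ is a differential polynomial of degree $m$ and $\tilde P$ has degree $\le n$. The hypothesis $n\ge m$ enters here. Clunie then gives $\Phi\equiv$ small, and comparing $f^{n-m+1}\Phi$ with $\tilde P$ via Hayman-type arguments forces the structure $f'/f$ to be small, so $f=q_1e^{\beta z}$ (using $N(r,1/f)$ small, which follows because zeros of $f$ would be zeros of $F-P$ of high multiplicity).

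\textbf{Step 3: identify $f$.} Substitute $f=q_1e^{\beta z}$ (with $q_1$ constant, since $f$ has order $\le1$ and no zeros; a polynomial factor in the exponent beyond degree one is excluded by growth). This gives $\beta q_1^{n+1}e^{(n+1)\beta z}+P(z,f)=\sum P_ie^{\alpha_iz}$, where $P$ produces exponentials $e^{k\beta z}$ with $k\le d\le n$. By Borel's lemma on linear independence of exponentials with distinct frequencies, every $\alpha_i$ equals $k\beta$ for some $k\in\{0,\dots,n+1\}$. Hence all the $\alpha_i$ lie on the line through $0$ and $\beta$. The largest modulus corresponds to $k=n+1$, because $|k\beta|<|(n+1)\beta|$ for $k\le n$ and the term $e^{(n+1)\beta z}$ cannot cancel. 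Therefore $\alpha_1=(n+1)\beta$ and $\beta q_1^{n+1}=P_1$, i.e. $q_1^{n+1}=(n+1)P_1/\alpha_1$.
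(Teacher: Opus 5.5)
\textbf{The statement is false.} The paper does not prove it; it states it only to point out that it is flawed, and refutes it with a counterexample. A clean counterexample, which also appears later in the paper as an example for the conjectured ``Result'', is $f(z)=e^{z}+1$. It satisfies
\[
f^3f'-f'=e^{4z}+3e^{3z}+3e^{2z};
\]
here $n=m=3$, $d=1$, $(\alpha_1,\alpha_2,\alpha_3)=(4,3,2)$ and all $P_i\neq0$, yet $f$ is not of the form $q_1e^{z}$. The paper's own counterexample needs a sign fix: $f=e^{2z}+1$ satisfies $f^3f'-f^2+1=2e^{8z}+6e^{6z}+5e^{4z}$. Allowing $d=n$ breaks even the case $m=1$: since $f'=f-1$, the same $f$ satisfies $f^3f'-3f^3+6f^2-3f-f'=(f-1)^4=e^{4z}$.

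\textbf{Step 2 is where your argument fails.} Applying $\prod_{i=1}^m(D-\alpha_i)$ to $f^nf'$ leaves a common factor $f^{n-m}$, not $f^{n-m+1}$. Already for $m=1$ your $\varphi$ contains $n(f')^2$, and in general $D^m(f^nf')$ contains $\frac{n!}{(n-m)!}f^{n-m}(f')^{m+1}$. So you obtain $f^{n-m}\Phi+\tilde P=0$ with $\deg\Phi=m+1$, and Clunie's lemma would need $\deg\tilde P\le n-m$. The hypotheses give only $\deg\tilde P\le d\le n$, and in the example above $n-m=0$, so nothing follows; the hypothesis $n\ge m$ merely makes the exponent nonnegative.

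The later steps of Step 2 also fail on this example. First, $N(r,1/f)=S(r,f)$ does not follow from the zeros of $f$ being multiple zeros of $F-P$: multiplicity says nothing about how many zeros there are. In the example $F-P=f^3f'=e^{z}(e^{z}+1)^3$, while $N(r,1/f)\sim T(r,f)$. Second, no ``Hayman-type argument'' can force $f'/f$ to be small, since here $f'/f=e^{z}/(e^{z}+1)$ is not.

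\textbf{Step 1 is also unjustified when $d=n$.} For entire $f$ one only has $(n+1)T(r,f)\le T(r,f^nf')+m(r,f/f')$, and $m(r,f/f')$ can be $\sim T(r,f)$ (here $f/f'=1+e^{-z}$). The general bound $m(r,f/f')\le T(r,f)+S(r,f)$ leaves $(n-d)T(r,f)\le T(r,F)+S(r,f)$, which is vacuous for $d=n$. Your pole comparison also ignores the derivative weight of the monomials in $P$ and the poles of its coefficients.

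Step 3 is fine once $f=q_1e^{\beta z}$ is known, but that form cannot be derived under the stated hypotheses. Your Clunie step would at least require something like $d\le n-m$.
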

	This theroem is flawed. The following example satisfies all the assumptions of the  theorem; however, the corresponding solution is not of the type described above.
	\begin{example}
		$f^3f'+f^2+1=2e^{8z}+6e^{6z}+5e^{4z}$ \text{has solution} $f(z)=e^{2z}+1$.
	\end{example}
	So, to tackle this flaw, we have the following result.

	Result:
		Let $n\geq3$ be an integer, $Q_d(z,f)$ be a differential polynomial in $f$ of degree $d\leq n-1,\; p_1,\; p_2,\; p_3$ be non-zero distinct constants. If $f(z)$ is a meromorphic solution with $N(r,f)=S(r,f)$ of the following equation:
		\begin{equation}\label{''}
			f^nf'+Q_d(z,f)=p_1e^{\alpha_1z}+p_2e^{\alpha_2z}+p_3e^{\alpha_3z},
		\end{equation}
		then one of the following occurs.\\
		$(1)$ $\alpha_1:\alpha_2:\alpha_3=n+1:k_1:k_2$, $f(z)=\beta_1e^{\frac{\alpha_1z}{n}}$ and $P_d(z,f)=p_2e^{\alpha_2z}+p_3e^{\alpha_3z}$, or \\
		$  \alpha_1:\alpha_2:\alpha_3=k_3:n+1:k_4$, $f(z)=\beta_2e^{\frac{\alpha_2z}{n}}$ and $P_d(z,f)=p_1e^{\alpha_1z}+p_3e^{\alpha_3z}$, where $k_j \in \{1,2,...,d\},$ $j=1,...,4$ are integers and $\beta_1$, $\beta_2$ are non-zero constants.\\
		$(2)$ $\alpha_1:\alpha_2:\alpha_3=n+1:n:k_5$ $(\text{or}\; n:n+1:k_5)$, and  $f(z)=\eta_1e^{\frac{\alpha_1z+\alpha_2z}{2n+1}}+\eta_0$, where $\eta_1 \; \text{and} \; \eta_0$  are non-zero constants  and $k_5\in\{1,2,...,d\}$.\\
		$(3)$  $\alpha_1:\alpha_2:\alpha_3=n+1:-(n+1):k_6$  and  $f(z)=d_1e^{\frac{\alpha_1z}{n}}+ d_2e^{\frac{\alpha_2z}{n}}$, where $d_1 \text{and } d_2$ are non-zero constants and $k_6\in\{1,2,...,d\}$.\\
      We believe that the above result is true based on strong intuition and supporting evidence. Although, we do not currently have a suitable methodology to establish a rigorous proof, the following example provides further support for its validity.
	
\begin{example}
	 For  $n=3,d=2$, $f(z)=e^z+e^{-z}$ is the solution of the following differential equation:
	$$f^3f'-2f''f'+4=e^{4z}-e^{-4z}-4e^{2z}.$$ It  satisfies conclusion $(3)$ of the result.
\end{example}
	\begin{example}
		For  $n=3,d=1$, $f(z)=e^z+1$ is the solution of the following differential equation:
		$$f^3f'-f'=e^{4z}+3e^{3z}+3e^{2z}.$$ It  satisfies conclusion $(2)$ of the result.
	\end{example} 
	If R.H.S of equation \eqref{''} consists of $p_1$,$p_2$,$p_3$ each of which is a small function of $f(z)$, then case $(2)$ yields the solution  
$f(z)=\eta_1e^{\frac{\alpha_1z+\alpha_2z}{2n+1}}+\eta_0(z)$, where $\eta_1$ is a non-zero constant and $\eta_0(z)$ is a small function of $f(z)$. The following example demonstrates the existence of such a solution.
\begin{example}
	
	The function $f(z)=e^z+z$ is a solution of the following differential
	equation
$$	f^3f'+z^2f+z^3f'-z^3=e^{4z}+(z+1)e^{3z}+(z^2+z)e^{2z},$$
	here  $n=3, d=1$.
		\end{example}
The following example shows that conclusion $(1)$ of the result can occur.
	\begin{example}
		For  $n=3,d=2$, $f(z)=e^z$ is the solution of the following diferential equation:
		$$f^3f'+3f^2+2f=e^{4z}+3e^{2z}+2e^{z}.$$ 
	\end{example}


	
	Throughout this paper, we assume that the reader is acquainted with the standard notations of Nevanlinna theory. For a meromorphic function $f$, $n(r,f)$, $N(r,f)$, $\Bar{N}(r,f) $, $m(r,f)$ and  $T(r,f)$ denote un-integrated counting function, integrated counting function, reduced counting function, proximity function and characteristic function respectively, see \cite{hay,ilpo,yanglo}.
	
	In Section $2$, we state  some definitions, lemmas  and results. In Section 3, we prove theorems.
	\medskip
	
	\section{\textbf{Auxiliary results}}
	To maintain self-containment, we briefly recall the definitions of the order of growth 
	$\rho(f)$,  hyper-order of growth $\rho_{2}(f)$  and the exponent of convergence of zeros $\lambda(f)$ for a meromorphic function $f(z)$.
	
	$$\rho(f)=\limsup_{r\to\infty}\frac{\log T(r,f)}{\log r},$$
	$$\rho_2(f)=\limsup_{r\to\infty}\frac{\log \log T(r,f)}{\log r},$$
	and
	$$\lambda(f)=\limsup_{r\to\infty}\frac{\log n\left(r,\frac{1}{f}\right)}{\log r}.$$ 
	
	\bigskip
	
	For a meromorphic function $f$, Nevanlinna’s First Main Theorem asserts  that
	$$T\left(r,\frac{1}{f-a}\right)=T(r,f)+O(1),$$
	for all $a\in \mathbb{C}$,
	where $	O(1)$ denotes a bounded quantity depending only on $a$.
	\smallskip
	
	A meromorphic function $g(z)$ is a small function of $f(z)$ if $T(r,g)=S(r,f)$ and vice versa.
	For a meromorphic function $f(z)$,  $S(r,f)$ denotes the quantity satisfying $S(r,f)=o(T(r,f))$, as $r\to\infty$, outside  of a possible exceptional set $E$ (not necessarily same at each occurrence) of  finite linear measure. \\
	A linear difference polynomial $P$  is an expression formed by finite sum of monomials  involving $f$ and its shifts. It has the general form
	\begin{equation*}
		U^*(z,f)=\sum_{ j=0}^{m}d_jf(z+k_j),
	\end{equation*}
	where $k_j's$ are shifts and $d_j's$ are constants.

	Borel’s Lemma is a fundamental tool in applying Nevanlinna theory to complex differential equations.
	\begin{lemma}\label{imple1}(Borel's Lemma)\cite{cc}
		Suppose that $f_1(z),f_2(z),...,f_n(z)(n\geq2)$ are meromorphic functions and $h_1(z),h_2(z),...,h_n(z)$ are entire functions satisfying:
		\begin{enumerate}
			\item $\sum_{i=1}^{\infty} f_i(z)e^{h_i(z)}\equiv 0$.
			\item For all $1\leq i< k\leq n$, $h_i-h_k$ are  constants.
			\item For each $1\leq i\leq n$ and for all $1\leq m< k\leq n$,  $T(r,f_i(z))=o(T(r,e^{h_m-h_k}))$ as $r\rightarrow \infty$ outside of a set of finite linear measure.
		\end{enumerate}
		Then $f_i\equiv0\ (i=1,2,\ldots,n)$.
	\end{lemma}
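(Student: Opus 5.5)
The statement is the classical Borel lemma. I read condition (2) as ``$h_i-h_k$ are \emph{not} constants'' and condition (1) as a finite sum over $i=1,\dots,n$, since those are clearly the intended hypotheses. I would prove it by induction on $n$, using only the first main theorem and the lemma on the logarithmic derivative.

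\emph{Base case $n=2$.} Suppose $f_1e^{h_1}+f_2e^{h_2}\equiv 0$ and $f_2\not\equiv 0$. Then $f_1\not\equiv 0$ as well, and
\[
e^{h_1-h_2}=-\frac{f_2}{f_1}.
\]
Taking characteristics gives $T(r,e^{h_1-h_2})\le T(r,f_1)+T(r,f_2)+O(1)$, and the right-hand side is $o(T(r,e^{h_1-h_2}))$ by (3). Since $h_1-h_2$ is a nonconstant entire function, $T(r,e^{h_1-h_2})\to\infty$, so the $O(1)$ term is absorbed. This is a contradiction outside a set of finite measure. Hence $f_1\equiv f_2\equiv 0$.

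\emph{Inductive step.} Assume the lemma for $n-1$ terms, and suppose some $f_i\not\equiv0$; after relabelling, $f_n\not\equiv 0$. Divide by $f_ne^{h_n}$ to get
\[
\sum_{i=1}^{n-1}\frac{f_i}{f_n}e^{h_i-h_n}\equiv -1 .
\]
Differentiating removes the constant:
\[
\sum_{i=1}^{n-1} g_i\,e^{h_i-h_n}\equiv 0,\qquad g_i=\Bigl(\frac{f_i}{f_n}\Bigr)'+\frac{f_i}{f_n}\,(h_i'-h_n').
\]
The new exponents $h_i-h_n$ have pairwise differences $h_i-h_k$, which are nonconstant by (2). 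For smallness of the $g_i$, first note $T(r,(f_i/f_n)')\le 2T(r,f_i/f_n)+S(r,f_i/f_n)$. Next, $h_i'-h_n'=(e^{h_i-h_n})'/e^{h_i-h_n}$ is entire, so the logarithmic derivative lemma gives
\[
T(r,h_i'-h_n')=m\bigl(r,h_i'-h_n'\bigr)=S\bigl(r,e^{h_i-h_n}\bigr).
\]
Hence each $g_i$ satisfies hypothesis (3) for the $(n-1)$-term system, and induction yields $g_i\equiv 0$ for all $i<n$.

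Now $g_i\equiv0$ says exactly that $\bigl((f_i/f_n)e^{h_i-h_n}\bigr)'\equiv 0$, so $(f_i/f_n)e^{h_i-h_n}=c_i$ for constants $c_i$ with $\sum_{i<n} c_i=-1$. Some $c_i\neq 0$, and then $e^{h_i-h_n}=c_if_n/f_i$ with $f_i\not\equiv0$. The $n=2$ argument now gives a contradiction. So every $f_i\equiv 0$.

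\emph{Main obstacle.} The delicate point is the smallness of the coefficient $h_i'-h_n'$ in the inductive step. The logarithmic derivative lemma only gives $T(r,h_i'-h_n')=S(r,e^{h_i-h_n})$, but hypothesis (3) for the reduced system requires $o(T(r,e^{h_m-h_k}))$ for \emph{every} pair $m,k$, with a single exceptional set. I would handle this as follows:

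\begin{enumerate}[(a)]
\item Order the functions $T(r,e^{h_m-h_k})$. Since $e^{h_i-h_n}=e^{h_i-h_k}e^{h_k-h_n}$, each of them is bounded by sums of the others up to $O(1)$.
\item Combining this with the Borel-type growth comparison, show the exceptional sets from the logarithmic derivative lemma (finitely many, each of finite linear measure) can be united into one set of finite linear measure.
\item If uniform comparability fails, choose the index $n$ to divide by so that $T(r,e^{h_i-h_n})$ is dominated by the minimal pairwise characteristic. Alternatively, fall back on Cartan's version of the second main theorem for $n$ entire functions without common zeros, which gives the conclusion directly.
\end{enumerate}
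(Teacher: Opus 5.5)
Your base case, the bound for $(f_i/f_n)'$, and the closing step from $g_i\equiv0$ to $e^{h_i-h_n}=c_if_n/f_i$ are all correct. Your reading of the two misprints in the statement is the intended one. The gap is exactly the step you flag as the main obstacle, and none of (a)--(c) closes it.

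The reduced $(n-1)$-term system needs $T(r,g_i)=o(T(r,e^{h_m-h_k}))$ for pairs $m<k\le n-1$. Whenever $f_i\not\equiv0$, $T(r,g_i)$ is at least $T(r,h_i'-h_n')$ minus terms that are small by (3). So you genuinely need $T(r,h_i'-h_n')$ to be small against pairs \emph{not} involving $n$, whereas the logarithmic derivative lemma only compares it with $T(r,e^{h_i-h_n})$.
\begin{enumerate}
\item[(a)] The triangle inequality bounds one pairwise characteristic by a \emph{sum} of two others, never by a single unrelated pair, so it cannot bridge this.
\item[(b)] This addresses a non-issue: finitely many sets of finite linear measure always unite to one. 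The difficulty is growth comparison, not exceptional sets.
\item[(c)] This fails in general. Take $h_1=0$, $h_2=z$, $h_3=e^{e^z}$, $h_4=e^{e^z}+z$. All differences are nonconstant and $T(r,e^{h_1-h_2})=T(r,e^{h_3-h_4})=r/\pi$. Yet whichever index $p$ you divide by, the pair from the other cluster survives in the reduced system, while some coefficient $h_i'-h_p'$ equals $\pm e^{z}e^{e^z}$ up to an additive constant. Its characteristic grows exponentially in $r$, so it is certainly not $o(r)$. The configuration is the same for every $r$, so no $r$-dependent choice of $p$ helps either.
\end{enumerate}

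As written, your induction proves the lemma only when the coefficients $h_i'-h_n'$ are automatically small, e.g.\ when all $h_i$ are polynomials. Then $T(r,h_i'-h_n')=O(\log r)$ while $T(r,e^{h_m-h_k})\ge cr$, and every step goes through. Incidentally, this is the only case in which the paper applies the lemma; the paper itself gives no proof, only a citation.

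For arbitrary entire $h_i$ you need the argument you only name at the end, which is the standard proof of Borel's theorem. Put $\phi_j=-(f_j/f_n)e^{h_j-h_n}$ for $j<n$, so that $\sum_{j<n}\phi_j\equiv1$.
\begin{enumerate}
\item[(1)] If the $\phi_j$ are linearly independent, the Nevanlinna--Cartan inequality bounds $T(r,\phi_j)$ by the counting functions of zeros and poles of the $\phi_j$ and of their Wronskian, plus an error $o(\max_j T(r,\phi_j))$. Those counting functions are controlled by the zeros and poles of $f_1,\dots,f_n$, hence small by (3). Choosing $j$ that attains the maximum gives a contradiction. It is precisely this max-based error term that sidesteps the comparability problem.
\item[(2)] If the $\phi_j$ are linearly dependent, a nontrivial relation gives $\sum_{j<n}c_jf_je^{h_j}\equiv0$. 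This is a shorter Borel identity whose hypotheses are inherited from (2) and (3). The induction hypothesis then forces $f_j\equiv0$ for some $j$, and you can drop that term.
\end{enumerate}
Until that argument is actually carried out, the proposal does not establish the lemma in the stated generality.
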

	We now state a lemma that gives an estimate for the proximity function of the logarithmic derivative of a meromorphic function $f(z)$. 
	
	\begin{lemma}\label{imple2}\cite{ilpo}
		Suppose $f(z)$ is a transcendental meromorphic function and $k\geq1$ is an integer then 
		\begin{equation*}
			m\left(r,\frac{f^{(k)}}{f}\right)= S(r,f).
		\end{equation*}
		If $f$ is of finite order growth, then
		$$m\left(r,\frac{f^{(k)}}{f}\right)=O(\log r).$$
	\end{lemma}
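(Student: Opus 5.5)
We would prove the case $k=1$ first and then pass to higher derivatives by induction. For $k=1$ we start from the Poisson--Jensen formula for $\log|f|$ on a disc $|\zeta|<R$ with $R>r$, and differentiate it in $z$. This gives the standard representation
\begin{equation*}
\frac{f'(z)}{f(z)}=\frac{1}{2\pi}\int_0^{2\pi}\log|f(Re^{i\theta})|\frac{2Re^{i\theta}}{(Re^{i\theta}-z)^2}\,d\theta+\sum_{|a_\mu|<R}\left(\frac{1}{z-a_\mu}+\frac{\bar a_\mu}{R^2-\bar a_\mu z}\right)-\sum_{|b_\nu|<R}\left(\frac{1}{z-b_\nu}+\frac{\bar b_\nu}{R^2-\bar b_\nu z}\right),
\end{equation*}
where $a_\mu$ are the zeros and $b_\nu$ the poles of $f$.

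Next we estimate $m(r,f'/f)$ term by term.
\begin{enumerate}[(a)]
\item The integral term is bounded by a constant times $\frac{R}{(R-r)^2}\bigl(T(R,f)+T(R,1/f)\bigr)$, hence by $O\bigl(\frac{R}{(R-r)^2}T(R,f)\bigr)$ via the First Main Theorem.
\item The sums over zeros and poles have total count $n(R,f)+n(R,1/f)$. Each kernel $1/(z-a)$ has $\int_0^{2\pi}\log^+\frac{1}{|re^{i\phi}-a|}\,d\phi$ bounded uniformly in $a$. Using $\log^+\sum_{j=1}^N x_j\le \sum_{j=1}^N\log^+x_j+\log N$, we get a contribution $O\bigl(\log^+ n(R)+\log^+\frac{1}{R-r}\bigr)+O(1)$. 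Here $n(R)$ is controlled by $N(R',f)+N(R',1/f)\le 2T(R',f)+O(1)$ for a slightly larger $R'$.
\end{enumerate}
Altogether this yields the Gol'dberg--Grinshtein type inequality
\begin{equation*}
m\left(r,\frac{f'}{f}\right)\le C\left(\log^+T(\rho,f)+\log^+\rho+\log^+\frac{1}{\rho-r}\right)+O(1),\qquad r<\rho .
\end{equation*}

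The main obstacle is converting this into $S(r,f)$, because $T(\rho,f)$ appears at a larger radius. We would take $\rho=r+1/T(r,f)$ and apply the Borel growth lemma: the inequality $T(r+1/T(r,f),f)\le 2T(r,f)$ holds outside a set of finite linear measure. This gives $m(r,f'/f)=O(\log T(r,f)+\log r)$ outside such a set. Since $f$ is transcendental, $\log r=o(T(r,f))$, so $m(r,f'/f)=S(r,f)$. If $f$ has finite order, we instead take $\rho=2r$, with no exceptional set needed. Then $\log^+T(2r,f)=O(\log r)$ and $\log^+\frac{1}{\rho-r}\le 0$ for large $r$, so $m(r,f'/f)=O(\log r)$.

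For general $k$ we write
\begin{equation*}
\frac{f^{(k)}}{f}=\prod_{j=0}^{k-1}\frac{f^{(j+1)}}{f^{(j)}},
\end{equation*}
so that $m(r,f^{(k)}/f)\le\sum_{j=0}^{k-1}m\bigl(r,f^{(j+1)}/f^{(j)}\bigr)$. Applying the $k=1$ case to $f^{(j)}$ gives terms $S(r,f^{(j)})$, or $O(\log r)$ in the finite-order case, since $f^{(j)}$ has the same order as $f$. It remains to check $T(r,f^{(j)})\le (j+1)T(r,f)+S(r,f)$. This follows inductively from $T(r,f')\le m(r,f'/f)+m(r,f)+N(r,f')\le 2T(r,f)+S(r,f)$, which uses the $k=1$ estimate. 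Hence $S(r,f^{(j)})=S(r,f)$, and the lemma follows.
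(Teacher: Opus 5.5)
The paper gives no proof of this lemma; it is quoted from the literature, so your outline has to stand on its own. Its architecture is the standard one, and most of it is fine: the differentiated Poisson--Jensen formula, the estimate of the integral term, Borel's growth lemma with $\rho=r+1/T(r,f)$, the choice $\rho=2r$ in the finite-order case, and the telescoping product for $k\ge 2$. For the last step you only need $T(r,f^{(j)})=O(T(r,f))$ outside an exceptional set, and your displayed inequality applied to $f^{(j)}$ already gives that. The gap is in step (b), which is exactly the step that makes the lemma nontrivial.

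The two facts you combine in (b) do not give $O(\log^+ n(R))$. Integrating, they give
\begin{equation*}
\frac{1}{2\pi}\int_0^{2\pi}\log^+\sum_{|a_\mu|<R}\frac{1}{|re^{i\phi}-a_\mu|}\,d\phi\le\sum_{|a_\mu|<R}\frac{1}{2\pi}\int_0^{2\pi}\log^+\frac{1}{|re^{i\phi}-a_\mu|}\,d\phi+\log^+ n(R)\le C\,n(R)+\log^+ n(R).
\end{equation*}
This bound is \emph{linear} in $n(R)$, and that is useless. For finite order it is typically a power of $r$, not $O(\log r)$. Without a growth assumption, $n(r,1/f)$ need not even be $O(T(r,f))$: for $f=e^{e^z}-1$ one has $n(r,1/f)/T(r,f)\to\infty$, and even the sharper per-kernel bound $O(1/r)$ only yields $O(T(r,f))$ there. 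What is missing is an argument in which the number of zeros and poles enters only through a logarithm.

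The standard device runs as follows. Fix $0<\alpha<1$ and write $\log^+x=\alpha^{-1}\log^+x^{\alpha}\le\alpha^{-1}\log(1+x^{\alpha})$. Use $(\sum_\mu y_\mu)^{\alpha}\le\sum_\mu y_\mu^{\alpha}$. Apply Jensen's inequality to the concave function $\log(1+t)$ to bring the average inside. Finally use the estimate $\frac{1}{2\pi}\int_0^{2\pi}|re^{i\phi}-a|^{-\alpha}\,d\phi\le C_\alpha r^{-\alpha}$, which holds uniformly in $a$. Together these bound the left side by $\alpha^{-1}\log(1+C_\alpha n(R)r^{-\alpha})=O(\log^+n(R)+\log^+\frac1r)+O(1)$. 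With that in place, your control of $n(R)$ through $N(\rho,f)+N(\rho,1/f)$ and the rest of your argument go through.
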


	The next lemma gives a sharp asymptotic estimate between $T(r, f(z + c))$ and $T(r, f)$, for meromorphic functions of finite order.
	\begin{lemma}\label{imple3}\cite{cf}
		Suppose $f(z)$ is a meromorphic function of finite order $\rho$ and $c$ is a non-zero complex constant then for every $\epsilon>0$,
		\begin{equation*}
			T(r,f(z+c))= T(r,f)+O(r^{\rho-1+\epsilon})+O(\log r).
		\end{equation*}
	\end{lemma}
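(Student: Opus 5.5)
The plan is to compare the two parts of the characteristic, $m$ and $N$, separately. Then we symmetrize: $g(z)=f(z+c)$ has the same order $\rho$ (this will follow from the one-sided estimate itself), and $f(z)=g(z-c)$. So it suffices to prove the one-sided bound
\begin{equation*}
T(r,f(z+c))\le T(r,f)+O(r^{\rho-1+\epsilon})+O(\log r).
\end{equation*}
We then apply the same bound to $g$ with shift $-c$ to get the reverse inequality.

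\emph{Counting functions.} A pole of $f(z+c)$ in $|z|\le r$ corresponds to a pole of $f$ in $|w|\le r+|c|$, so $n(r,f(z+c))\le n(r+|c|,f)$. Integrating gives
\begin{equation*}
N(r,f(z+c))\le N(r+|c|,f)+O(\log r),
\end{equation*}
where the $O(\log r)$ absorbs the contribution of a possible pole at the origin. Next,
\begin{equation*}
N(r+|c|,f)-N(r,f)=\int_r^{r+|c|}\frac{n(t,f)}{t}\,dt\le \frac{|c|\,n(r+|c|,f)}{r}.
\end{equation*}
Since $n(t,f)=O(t^{\rho+\epsilon})$ for a function of order $\rho$, this difference is $O(r^{\rho-1+\epsilon})$. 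This step is routine.

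\emph{Proximity functions.} By subadditivity of $\log^+$,
\begin{equation*}
m(r,f(z+c))\le m(r,f)+m\left(r,\frac{f(z+c)}{f(z)}\right),
\end{equation*}
so everything reduces to the logarithmic-difference estimate
\begin{equation*}
m\left(r,\frac{f(z+c)}{f(z)}\right)=O(r^{\rho-1+\epsilon}).
\end{equation*}
This is the main obstacle, as the analogue of Lemma~\ref{imple2} for shifts. My plan follows the Chiang--Feng approach via the Poisson--Jensen formula on a disc of radius $R=2r$ (or $R=r+|c|+$ something comparable). I write $\log|f(z+c)|-\log|f(z)|$ as the sum of two pieces:
\begin{enumerate}[(a)]
\item a boundary integral of $\log|f(Re^{i\phi})|$ against the difference of the Poisson kernels at $z+c$ and $z$;
\item sums over the zeros and poles $a_\nu$ in $|w|<R$ of $\log\bigl|1+\frac{c}{z-a_\nu}\bigr|$, together with the analogous Blaschke-factor differences.
\end{enumerate}
For (a), the kernel difference is $O\bigl(|c|R/(R-r)^2\bigr)=O(1/r)$. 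This piece is therefore bounded by $O\bigl(T(R,f)/r\bigr)=O(r^{\rho-1+\epsilon})$. For (b), I take $\log^+$, integrate over $|z|=r$, and use the elementary estimate
\begin{equation*}
\frac{1}{2\pi}\int_0^{2\pi}\log^+\left|1+\frac{c}{re^{i\theta}-a}\right|d\theta=O\left(\left(\frac{|c|}{r}\right)^{1-\delta}\right),
\end{equation*}
which holds uniformly in $a$ (the logarithmic singularity near $a$ is integrable). Summing over the $n(R,f)+n(R,1/f)=O(r^{\rho+\epsilon})$ zeros and poles then gives $O(r^{\rho-1+\epsilon})$ once $\delta$ is absorbed into $\epsilon$. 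The terms coming from the Blaschke factors $|R^2-\bar a_\nu w|$ are handled the same way; they are easier since $|w|\le r+|c|<R$.

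Combining the two parts gives the one-sided bound. The symmetry argument then yields the stated two-sided estimate, and the $O(\log r)$ term covers the case $\rho<1$ (including rational-type contributions at the origin).
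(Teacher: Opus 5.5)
The paper does not prove this lemma; it quotes it from Chiang and Feng \cite{cf}. As you say, your outline is essentially their argument: compare counting functions through the shifted poles, reduce the proximity function to the logarithmic difference estimate, prove that by Poisson--Jensen on $|w|=2r$, then symmetrize via $f(z)=g(z-c)$. The proximity part and the symmetrization are sound, apart from two cosmetic points that do not change the outcome:
\begin{enumerate}[(1)]
\item The pole terms enter as $\log|1-c/(z+c-b_\nu)|$, not $\log|1+c/(z-b_\nu)|$.
\item Uniformity in $a$ of your elementary integral estimate does not follow from integrability of the singularity alone. It follows from $\log(1+x)\le C_\delta x^{1-\delta}$ together with $|re^{i\theta}-a|\ge r|\sin(\theta-\arg a)|$, which give $\frac{1}{2\pi}\int_0^{2\pi}|re^{i\theta}-a|^{-(1-\delta)}\,d\theta\le C'_\delta\, r^{-(1-\delta)}$.
\end{enumerate}

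The one step that is actually wrong is the intermediate claim $N(r,f(z+c))\le N(r+|c|,f)+O(\log r)$. Integrating $n(t,f(z+c))\le n(t+|c|,f)$ against $dt/t$ does not produce $N(r+|c|,f)$, which integrates $n(s,f)$ against $ds/s$ with $s=t+|c|$. The mismatch is not confined to the origin, and the claim is false in general. Take $c=1$ and let $f$ be the reciprocal of the canonical product over the points $1+\sqrt{k}$, $k\ge1$. Then $f$ has order $2$ and simple poles exactly there, and
\[
N(r,f(z+1))-N(r+1,f)=\sum_{k\le r^{2}}\Bigl[\log\bigl(1+k^{-1/2}\bigr)-\log\bigl(1+r^{-1}\bigr)\Bigr]=r-\log r+O(1).
\]

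The repair is short. For $r>1$, split off $t\in[0,1]$; together with the term $n(0,f(z+c))\log r$ this contributes $O(\log r)$. On $[1,r]$ use $\frac{1}{t}=\frac{1}{t+|c|}+\frac{|c|}{t(t+|c|)}$. The first piece integrates to at most $N(r+|c|,f)$, so
\[
N(r,f(z+c))\le N(r+|c|,f)+|c|\int_{1}^{r}\frac{n(t+|c|,f)}{t^{2}}\,dt+O(\log r).
\]
Since $n(t,f)=O(t^{\rho+\epsilon})$, the integral is $O(r^{\rho-1+\epsilon})+O(\log r)$. Combined with your correct bound $N(r+|c|,f)-N(r,f)\le |c|\,n(r+|c|,f)/r$, this gives $N(r,f(z+c))\le N(r,f)+O(r^{\rho-1+\epsilon})+O(\log r)$. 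That is all your final combination step uses, so the proof goes through once this line is corrected.
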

	Next lemma plays an important role in the study of complex differential-difference
	equations.
	\begin{lemma}\label{imple 4}\cite{yl}
		Let $f(z)$ be  a transcendental meromorphic function, and $P(z,f)$, $Q(z,f)$ be two differential-difference polynomials of $f(z)$. If 
		\begin{equation*}
			f^n(z) P(z,f)=Q(z,f)
		\end{equation*}
		holds, and if the total degree of $Q(z,f)$ in  $f(z)$ and its derivatives and their shifts is at most $n$, then
		\begin{equation*} 
			m(r,P(z.f))=S(r,f),
		\end{equation*}
		for all $r$ outside of a possible exceptional set of finite logarithmic measure.
	\end{lemma}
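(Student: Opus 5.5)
The statement just before this point is Lemma~\ref{imple 4}, a Clunie-type lemma for differential-difference polynomials. I will prove it by the standard Clunie argument, adapted to shifts by combining Lemma~\ref{imple2} with the difference analogue of the logarithmic derivative lemma. The finite-logarithmic-measure exceptional set in the conclusion comes from that difference analogue.

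\textbf{Step 1: split the circle.} Fix $r$ and let $E_1=\{\theta\in[0,2\pi): |f(re^{i\theta})|<1\}$ and $E_2=[0,2\pi)\setminus E_1$. Then
$$m(r,P)=\frac{1}{2\pi}\int_{E_1}\log^+|P(re^{i\theta})|\,d\theta+\frac{1}{2\pi}\int_{E_2}\log^+|P(re^{i\theta})|\,d\theta.$$

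\textbf{Step 2: the arc $E_1$.} Write $P(z,f)=\sum_\lambda b_\lambda(z)M_\lambda(z,f)$. Each monomial is
$$M_\lambda=\prod_j f^{(k_j)}(z+c_j)^{\mu_j},$$
and I factor it as
$$M_\lambda=f^{\deg M_\lambda}\prod_j\left(\frac{f^{(k_j)}(z+c_j)}{f(z)}\right)^{\mu_j}.$$
On $E_1$ we have $|f|<1$, so $|f|^{\deg M_\lambda}\le 1$. Each quotient is handled by writing
$$\frac{f^{(k)}(z+c)}{f(z)}=\frac{f^{(k)}(z+c)}{f(z+c)}\cdot\frac{f(z+c)}{f(z)}.$$
The first factor has $m(r,\cdot)=S(r,f)$ by Lemma~\ref{imple2} applied to $f(z+c)$, since $T(r,f(z+c))$ is comparable to $T(r,f)$ outside a set of finite logarithmic measure. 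The second factor is controlled by the difference logarithmic derivative lemma of Halburd--Korhonen / Chiang--Feng (compare Lemma~\ref{imple3}). The coefficients $b_\lambda$ are small functions, so their proximity functions are $S(r,f)$. Together these give
$$\frac{1}{2\pi}\int_{E_1}\log^+|P|\,d\theta=S(r,f).$$

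\textbf{Step 3: the arc $E_2$.} Here $|f|\ge1$, and I use the equation in the form $P=Q/f^n$. Each monomial of $Q$ has total degree $\deg M\le n$, so the same factorization gives
$$\left|\frac{M}{f^n}\right|=|f|^{\deg M-n}\prod\left|\frac{f^{(k_j)}(z+c_j)}{f}\right|^{\mu_j}\le\prod\left|\frac{f^{(k_j)}(z+c_j)}{f}\right|^{\mu_j}.$$
By the same quotient estimates as in Step 2, the integral over $E_2$ is also $S(r,f)$. Adding the two arcs yields $m(r,P)=S(r,f)$ outside a set of finite logarithmic measure.

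\textbf{Main obstacle.} The delicate point is the shift quotients $f(z+c)/f(z)$. The difference logarithmic derivative lemma requires finite order, or at least hyper-order less than $1$, to give $S(r,f)$. The statement as written places no growth condition on $f$, so I would either add such an assumption implicitly, as in \cite{yl}, or restrict to settings where $\rho_2(f)<1$. This is consistent with the hypothesis $\rho_2(f)<1$ in Theorem~\ref{main2}. For the purely differential case, no such condition is needed.
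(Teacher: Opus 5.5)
The paper gives no proof of this lemma; it is quoted from \cite{yl}. Your argument is the standard Clunie splitting argument behind the difference and differential-difference Clunie lemmas in that literature: split the circle according to $|f|<1$ or $|f|\ge 1$, write each monomial as $f^{\deg M}$ times quotients $f^{(k)}(z+c)/f(z)$, and use $P=Q/f^{n}$ on the second arc.

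It is correct once the coefficients are small functions of $f$ and $f$ has finite order. It also works under $\rho_2(f)<1$, using the Halburd--Korhonen--Tohge estimates.

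Your caveat about the growth condition is necessary, not cosmetic, because the lemma as printed is false. Take $f(z)=e^{2^{z}}$, for which $\rho_2(f)=1$. It satisfies $f(z)\cdot f(z)=f(z+1)$. So with $n=1$, $P=f$ and $Q=f(z+1)$ the hypotheses hold, yet $m(r,P)=T(r,f)$, which is not $S(r,f)$.

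Two small points of precision:
\begin{enumerate}
\item The difference logarithmic derivative lemma in the paper is Lemma~\ref{imple}, not Lemma~\ref{imple3}.
\item For the quotient $f(z+c)/f(z)$, rely on the Halburd--Korhonen form $m(r,f(z+c)/f(z))=o(T(r,f)/r^{\delta})$, valid outside a set of finite logarithmic measure. The Chiang--Feng bound $O(r^{\rho-1+\epsilon})$ alone need not be $S(r,f)$ when the lower order of $f$ is below $\rho-1$.
\end{enumerate}
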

	
	The next lemma formulates the difference version of the logarithmic derivative lemma for finite-order meromorphic functions.
	\begin{lemma}\label{imple}\cite{cf}
		Suppose $f(z)$ is a meromorphic function with $\rho(f)<\infty$ and $c_1,c_2 \in \mathbb{C}$ such that $c_1\neq c_2$ then for each $ \epsilon>0$, we have
		\begin{equation*}
			m\left(r,\frac{f(z+c_1)}{f(z+c_2)}\right)=O(r^{\rho-1+\epsilon}).
		\end{equation*}
	\end{lemma}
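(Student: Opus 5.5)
To prove this lemma, I would follow the Chiang–Feng approach through the Poisson–Jensen formula. First I reduce to the case $c_2=0$. Writing $g(z)=f(z+c_2)$ and $c=c_1-c_2\neq 0$, we have $f(z+c_1)/f(z+c_2)=g(z+c)/g(z)$. Lemma \ref{imple3} gives $\rho(g)=\rho(f)$ and $T(r,g)\le T(r+|c_2|,f)+O(1)$. So it suffices to show that $m\left(r,\frac{g(z+c)}{g(z)}\right)=O(r^{\rho-1+\epsilon})$ for a finite-order meromorphic $g$, which I now rename $f$. Since $\log^+|x|\le|\log|x||$, it is enough to bound $\frac{1}{2\pi}\int_0^{2\pi}\left|\log\left|\frac{f(z+c)}{f(z)}\right|\right|d\theta$ on $|z|=r$, for all large $r$.

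Next I fix $R=2r$ (any $R=\kappa r$ with $\kappa>1$ works). For $|z|=r$ and $r$ large, both $z$ and $z+c$ lie in $|w|<R$, so the Poisson–Jensen formula applies at both points. Subtracting the two formulas writes $\log|f(z+c)|-\log|f(z)|$ as a sum of three pieces:
\begin{itemize}
\item[(a)] $\frac{1}{2\pi}\int_0^{2\pi}\log|f(Re^{i\phi})|\,\bigl(P(z+c,\phi)-P(z,\phi)\bigr)\,d\phi$, where $P$ is the Poisson kernel for $|w|<R$;
\item[(b)] $\sum_{|a_\mu|<R}\Bigl(\log\bigl|\tfrac{R^2-\bar a_\mu(z+c)}{R^2-\bar a_\mu z}\bigr|-\log\bigl|\tfrac{z+c-a_\mu}{z-a_\mu}\bigr|\Bigr)$, summed over the zeros $a_\mu$;
\item[(c)] the same sum over the poles $b_\nu$, with the opposite sign.
\end{itemize}

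For (a), the kernel difference is $O\!\left(|c|R/(R-r-|c|)^2\right)=O(1/r)$ uniformly in $\phi$. Hence (a) is $O\!\left(\tfrac{1}{r}\bigl(m(R,f)+m(R,1/f)\bigr)\right)=O\!\left(T(2r,f)/r\right)=O(r^{\rho-1+\epsilon})$, using finite order.

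In (b) and (c), the factors $\frac{R^2-\bar a(z+c)}{R^2-\bar a z}=1+O(|c|/R)$, so their logarithms contribute $O\!\left((n(R,0)+n(R,\infty))/r\right)=O(r^{\rho-1+\epsilon})$.

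The remaining terms are $\sum\log|1+c/(z-a)|$ over the zeros and poles in $|w|<R$, and this is the main obstacle. Each term can be large when $z$ passes near $a$ or near $a-c$, so a pointwise bound fails. I handle it after integrating in $\theta$. I would use the elementary estimate
\[
\left|\log|1+u|\right|\le |u|+\log^+\frac{1}{|1+u|},
\]
together with an averaged estimate for the singular part. The key fact is that for any fixed point $b$,
\[
\frac{1}{2\pi}\int_0^{2\pi}\log^+\frac{1}{|re^{i\theta}-b|}\,d\theta
\]
is bounded by an absolute constant, because the logarithmic singularity is integrable on a circle; this is Gol'dberg–Grinshtein type. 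A more refined version of this bound is $O(|c|/r)$ once it is paired with the factor $|z-b|/|c|$.

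Concretely, for each $a$ I split $\log|z+c-a|-\log|z-a|$. When $|z-a|\ge 2|c|$ the term is $O(|c|/|z-a|)$. The set of $\theta$ where $|re^{i\theta}-a|<2|c|$ has measure $O(|c|/r)$, and on that set the integral of the log-singularity is $O\!\left((|c|/r)\log(r/|c|)\right)$. I would also need $\int_0^{2\pi}\frac{d\theta}{|re^{i\theta}-a|}=O\!\left(\frac{1}{r}\log\frac{r}{|r-|a||+1}\right)$. Summing over the $n(2r)=O(r^{\rho+\epsilon/2})$ zeros and poles would then give $O(r^{\rho-1+\epsilon/2}\log r)=O(r^{\rho-1+\epsilon})$. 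Getting these per-point integrals uniformly of size $O(r^{-1}\log r)$, including for points $a$ very close to the circle $|w|=r$, is the delicate calculation. If the direct bound is awkward, I would first try the integral inequality $\int_0^{2\pi}\bigl|\log|1-re^{i\theta}/b|\bigr|\,d\theta\le C$ combined with Cartan's lemma. Combining (a)–(c) then yields the claim.
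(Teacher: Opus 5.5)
Your argument is correct. Its skeleton is the one in Chiang--Feng, whose proof the paper does not reproduce and simply cites. The common steps are: Poisson--Jensen on a disc of radius $R$, a fixed multiple of $r$; the kernel difference contributing $O(1/r)\,(m(R,f)+m(R,1/f))$; the Blaschke corrections contributing $O(n(R)/r)$ with $n(R)=n(R,0)+n(R,\infty)$; and everything reduced to a per-point bound, uniform in $a$, for $\frac{1}{2\pi}\int_0^{2\pi}\left|\log\left|\frac{re^{i\theta}+c-a}{re^{i\theta}-a}\right|\right|d\theta$.

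You differ only in that last step. Chiang--Feng avoid any near/far splitting. They use the elementary inequality $\left|\log|w_1/w_2|\right|\le C_\alpha\left(\left|\frac{w_1-w_2}{w_2}\right|^\alpha+\left|\frac{w_1-w_2}{w_1}\right|^\alpha\right)$ for $0<\alpha<1$, which follows from $\log(1+x)\le x^\alpha/\alpha$. They combine it with $\frac{1}{2\pi}\int_0^{2\pi}|re^{i\theta}-w|^{-\alpha}\,d\theta\le C'_\alpha r^{-\alpha}$, uniformly in $w$. This gives $O(r^{-\alpha})$ per point, and one takes $\alpha=1-\varepsilon$. Your split gives the slightly sharper $O(r^{-1}\log r)$ per point. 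The price is treating the logarithmic singularity and the term $|c|/|z-a|$ separately, since the circle integral of the latter blows up as $|a|\to r$. The $\varepsilon$ is lost anyway through $T(2r,f)$ and $n(2r)=O(r^{\rho+\varepsilon})$, so both routes give the lemma.

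A few of your auxiliary statements need adjusting.

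The full-circle bound $\int_0^{2\pi}\frac{d\theta}{|re^{i\theta}-a|}=O\left(\frac1r\log\frac{r}{|r-|a||+1}\right)$ is false: the integral diverges when $|a|=r$. What your split actually uses is the integral over $\{\theta:|re^{i\theta}-a|\ge 2|c|\}$. That integral is $O\left(\frac1r\log\frac{r}{|c|}\right)$ uniformly in $a$, because $|re^{i\theta}-a|\ge\frac{2r}{\pi}|\theta-\arg a|$ for $|\theta-\arg a|\le\frac{\pi}{2}$ and $|re^{i\theta}-a|\ge r$ otherwise.

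The same angular inequality gives $\frac{1}{2\pi}\int_0^{2\pi}\log^+\frac{1}{|re^{i\theta}-b|}\,d\theta\le\frac{1}{2r}$ for $r\ge 1$. An absolute-constant bound, as you state it, would be useless after summing over $n(2r)$ points. The $O(1/r)$ bound is exactly what makes the near zone harmless, so no pairing trick or Cartan lemma is needed.

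Finally, the reduction to $c_2=0$ via Lemma~\ref{imple3} is an unnecessary detour, and the standard proof of that lemma itself relies on the estimate you are proving. You bound the integral of $\left|\log|f(z+c)/f(z)|\right|$, so you control both $m\left(r,\frac{f(z+c)}{f(z)}\right)$ and $m\left(r,\frac{f(z)}{f(z+c)}\right)$. Then $\log^+|xy|\le\log^+|x|+\log^+|y|$, applied to $\frac{f(z+c_1)}{f(z)}\cdot\frac{f(z)}{f(z+c_2)}$, gives the general case directly.
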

	Next lemma estimates the characteristic function of an exponential polynomial $f$. This lemma can be seen in \cite{whl}.
	\begin{lemma}\label{imple5}\cite{whl}
		Suppose $f$ is an entire function given by
		$$f(z)=B_{0}(z)+B_{1}(z)e^{w_{1}z^{t}}+B_{2}(z)e^{w_{2}z^{t}}+\cdots+B_{m}(z)e^{w_{m}z^{t}},$$
		where $B_{i}(z);0\leq i\leq m$ denote either exponential polynomial of degree $<t$ or polynomial in $z$, $w_{i};1\leq i\leq m$ denote the constants and $t$ denotes a natural number. Then
		$$T(r,f)=C(Co(W_{0}))\frac{r^{t}}{2\pi}+o(r^{t}).$$
		Here $C(Co(W_{0}))$ is the perimeter of the convex hull of the set $W_{0}=\{0,\overline{w}_{1},\overline{w}_{2},...,\overline{w}_{m}\}$.
	\end{lemma}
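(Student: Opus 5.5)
This lemma is quoted from \cite{whl}, so my plan is to reconstruct its standard proof rather than derive it from anything earlier in the paper. The approach is to compute $m(r,f)$ directly, using the asymptotic behaviour of $\log|f(re^{i\theta})|$ on the circle $|z|=r$. Since $f$ is entire, $N(r,f)=0$ and $T(r,f)=m(r,f)$.

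\textbf{Step 1: the dominant exponent in each direction.} Set $w_0=0$ and $B_0$ as given. For fixed $\theta$, write $z=re^{i\theta}$. Then
\[
|B_j(z)e^{w_jz^t}|=|B_j(z)|\exp\bigl(r^t\,\mathrm{Re}(w_je^{it\theta})\bigr).
\]
Each $B_j$ is a polynomial or an exponential polynomial of degree $<t$, so it satisfies $\log^+|B_j(z)|=O(r^{t-1})$. It is also not too small outside small exceptional sets: apply Lemma \ref{imple5} inductively on $t$ together with standard estimates for exponential polynomials. Define
\[
h(\theta)=\max_{0\le j\le m}\mathrm{Re}(w_je^{it\theta})=\max_j\mathrm{Re}(\overline{\overline{w_j}}\,e^{it\theta}).
\]
This is exactly the support function of the convex hull $Co(W_0)$, evaluated in the direction $e^{-it\theta}$. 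In particular $h\ge0$, since $0\in W_0$.

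\textbf{Step 2: the pointwise asymptotic.} I would show that $\log|f(re^{i\theta})|=h(\theta)r^t+o(r^t)$ for all $\theta$ outside a set whose measure tends to $0$. Off finitely many critical rays, a unique index $j$ attains the maximum in $h(\theta)$, and that term dominates the others exponentially. Near the critical rays, the contribution to the integral is controlled by the upper bound alone, because the upper bound $\log|f|\le h(\theta)r^t+O(r^{t-1}+\log r)$ holds everywhere.

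\textbf{Step 3: integrate.} Since $h\ge0$, we get $\log^+|f|=h(\theta)r^t+o(r^t)$ in measure, and dominated convergence gives
\[
m(r,f)=\frac{r^t}{2\pi}\int_0^{2\pi}h(\theta)\,d\theta+o(r^t).
\]
As $\theta$ runs over $[0,2\pi)$, the angle $t\theta$ covers the circle $t$ times. This gives $\int_0^{2\pi}h\,d\theta=\int_0^{2\pi}H(\phi)\,d\phi$, where $H$ is the support function of $Co(W_0)$. By Cauchy's formula for convex sets, this integral equals the perimeter $C(Co(W_0))$, which is the claim.

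\textbf{Main obstacle.} The hardest step is the lower bound in Step 2 when the dominant coefficient $B_j$ is itself an exponential polynomial of lower degree. One must ensure that $\log|B_j|$ is $o(r^t)$ from below on most of the circle, i.e.\ that $m(r,1/B_j)=o(r^t)$. I would handle this by induction on $t$. Lemma \ref{imple5} for degree $t-1$ gives $T(r,B_j)=O(r^{t-1})$, and the first main theorem then gives $m(r,1/B_j)\le T(r,B_j)+O(1)=O(r^{t-1})$. This is exactly what the integrated estimate requires.
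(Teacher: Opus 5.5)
The paper gives no proof of this lemma. It is quoted from \cite{whl}, so there is no internal argument to compare yours with. Judged on its own, your reconstruction is the standard indicator-function argument, and it works. The ingredients are:
\begin{itemize}
\item the uniform upper bound $\log^+|f(re^{i\theta})|\le h(\theta)r^t+O(r^{t-1}+\log r)$;
\item the lower bound on closed arcs where a single index $j\ge1$ dominates with gap at least $\delta>0$;
\item the Chebyshev estimate $\bigl|\{\theta:\log^+|1/B_j(re^{i\theta})|>\epsilon r^t\}\bigr|\le 2\pi m(r,1/B_j)/(\epsilon r^t)\to0$;
\item the trivial bound $\log^+|f|\ge0$ on small neighbourhoods of the finitely many critical rays, where $h$ is bounded;
\item bounded convergence for $\log^+|f(re^{i\theta})|/r^t$, and Cauchy's formula $\int_0^{2\pi}H_K(e^{i\phi})\,d\phi=C(K)$.
\end{itemize}
Together these give exactly the claimed asymptotic.

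A few repairs are needed.

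First, Step 2 is false as written for $\log|f|$ when $B_0\equiv0$, which the statement allows. For $f=e^z$, $\log|f(re^{i\theta})|=r\cos\theta$ differs from $h(\theta)r=\max(0,\cos\theta)\,r$ on an arc of length $\pi$. Prove the $\log^+$ version directly. In directions with $h(\theta)=0$ it follows from the upper bound together with $\log^+|f|\ge0$, and Step 3 uses nothing more.

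Second, the induction on $t$ via the lemma itself is unnecessary. Since $\log^+|B_j(z)|=O(r^{t-1}+\log r)$ on $|z|=r$, you get $T(r,B_j)=O(r^{t-1}+\log r)$ at once. The first main theorem then bounds $m(r,1/B_j)$ by the same order.

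Third, two steps rely on hypotheses that the quoted statement omits but needs to be true: ``a unique maximiser off finitely many rays'' and the finiteness of $m(r,1/B_j)$. The $w_j$ must be pairwise distinct and nonzero, and $B_j\not\equiv0$ for $j\ge1$. Otherwise, for instance, a term with $B_1\equiv0$ would wrongly keep $\overline{w}_1$ in $W_0$.

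Finally, $\max_j\mathrm{Re}(w_je^{it\theta})$ is the support function of $Co(W_0)$ in the direction $e^{it\theta}$, not $e^{-it\theta}$. This is harmless, since you integrate over the whole circle.
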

		%
	
	

	\section{\textbf{Proof of  main theorems}}

\begin{proof}[\textbf{\underline{Proof of Theroem \ref{main2} }}] 
	Let  $f$ be a transcendental entire solution with  $\rho_2(f)<1$. For simplicity, we write $U^*:=U^*(r,f)$ and $\rho(p_1e^{\gamma_{1}z}+p_2e^{\gamma_{2}z})=1$ implies $\rho(f^2f'+U^*)=1$. Now, using equation  \eqref{mde} and Lemma \ref{imple}, it follows that
\begin{align*}
	T(r,p_1e^{\gamma_{1}z}+p_2e^{\gamma_{2}z})=T(r,f^2f'+U^*)&\leq T(r,f^2f')+T(r,U^*)\\
	&=m(r,f^2f')+m(r,U^*)\\
	&\leq 2m(r,f)+m(r,f')+m\left(r,\frac{U^*}{f}\right)+m(r,f)\\
	&\leq 4m(r,f)+S(r,f)=4T(r,f)+S(r,f).
\end{align*} 
This inequality gives $\rho(f)\geq 1$. Now,   by equation \eqref{mde},  $\rho(f^2f'+U^*)=1$, and by the property of characteristic function, we have  $T(r,f^2f'+U^*)=O(r)$, and hence
\begin{align*}
&4T(r,f)+S(r,f)=O(r), \\&  T(r,f)=O(r).
\end{align*}
Along with the definition of order, we have $\rho(f)\leq1$.
Hence, $\rho(f)=1$.\\
Differentiating equation \eqref{mde}, we get
\begin{equation}\label{dmde}
	f^2f''+2ff'^2+(U^{*})'=p_1\gamma_1 e^{\gamma_1z}+p_2\gamma_2 e^{\gamma_2z}.
\end{equation}
Eliminating $e^{\gamma_1z}$ from equations \eqref{mde} and \eqref{dmde}, we obtain
\begin{equation}\label{eqe2}
	\gamma_1f^2f'+\gamma_1 U^{*}-f^2f''-2f(f')^2-(U^{*})'=(\gamma_1-\gamma_2)p_2e^{\gamma_2z}.
\end{equation}
By differentiating equation \eqref{eqe2}, we get
\begin{equation}\label{eqde2}
	2\gamma_1f(f')^2+\gamma_1f^2f''+\gamma_1(U^{*})'-f^2f'''-2ff'f''-2(2f(f')^2f''+(f')^3)-(U^{*})''=p_2\gamma_2(\gamma_1-\gamma_2)e^{\gamma_2z}.
\end{equation}
Eliminate $e^{\gamma_2z}$ from equations \eqref{eqe2} and \eqref{eqde2} to get
\begin{equation}\label{eqq3}
	\phi=H,
\end{equation}
where
\begin{equation}\label{psieq}
	\phi=\gamma_1\gamma_2f^2f'-2(\gamma_1+\gamma_2)f(f')^2-(\gamma_1+\gamma_2)f^2f''+2(f')^3+6ff'f''+f^2f'''
\end{equation}
and 
\begin{equation*}\label{qeq}
	H=(-U^{*})''+(\gamma_1+\gamma_2)( U^{*})'-\gamma_1\gamma_2(U)^*.
\end{equation*}
If $\phi\not\equiv 0$, then using equation \eqref{eqq3} together with  Lemma  \ref{imple 4} and Lemma  \ref{imple2} to obtain
\begin{align}\label{mrfeq3}
	m\left(r,\frac{\phi}{f}\right)&= S(r,f) \; \; \text{as} \;  m\left(r,\frac{H}{f}\right)=S(r,f), \\& \nonumber m\left(r,\frac{\phi}{f^3}\right)=S(r,f).
\end{align}
Using equation \eqref{mrfeq3} and first fundamental theorem of Nevanlinna, we obtain
\begin{align*}
	3T(r,f)=T(r,f^3)&=m\left(r,\frac{1}{f^3}\right) +N\left(r,\frac{1}{f^3}\right)+O(1) \\
	&\leq m\left(r,\frac{\phi}{f^3}\right)+ m\left( r,\frac{1}{\phi}\right) +2N\left( r,\frac{1}{f}\right)+O(1) \\
	&\leq S(r,f)+m(r,\phi)+3N\left( r,\frac{1}{f}\right) \\
	&\leq m\left( r,\frac{\phi}{f}\right) +m(r,f)+3N\left( r,\frac{1}{f}\right)+S(r,f) \\
	&=T(r,f)+3N\left( r,\frac{1}{f}\right)+S(r,f) .
\end{align*}
Therefore
$$2T(r,f)\leq 3N\left( r,\frac{1}{f}\right) +S(r,f),$$
which is one of the conclusions.\\

Now, we assume that $\phi\equiv 0$. Suppose  that $f(z)$ has infinitely many zeros then, in view of equation \eqref{eqq3}, every zero of $f(z)$, except possibly finitely many exceptional ones, has multiplicity at least $2$. Let $z_0$ be a zero of $f(z)$ of multiplicity $k\geq2$. In some small neighbourhood of $z_0$, we may write $f(z)=b_k(z-z_0)^k+b_{k+1}(z-z_0)^{k+1}+\cdots(b_k\neq0,k\geq2)$.
Substituting this expansion into equation \eqref{eqq3} and comparing the lowest-order term in $(z-z_0)$, we obtain
$$2(ka_k)^3+6a_k^3k(k-1)+k(k-1)(k-2)a_k^3=0,$$
which leads to
$$ka_k^3(2k^2+6(k-1)+(k-1)(k-2))=0.$$
This gives $k=0$ or $k=1$, which yields a contradiction from  the assumption  $k\geq2$. Hence, $f(z)$ cannot have infinitely many zeros. Therefore, $f(z)$ possesses only finitely many zeros, and consequently admits the representation where $\tilde {q}(z)$ is any nonzero polynomial  and $p(z)$ is a polynomial of order $1$, say $\tilde{q}(z)=a_tz^t+a_{t-1}z^{t-1}+\cdots+a_0$, and $p(z)=\alpha z+\beta$, where $a_n,a_{n-1},...,a_0,\alpha ,\beta$ are any constants with $a_t\neq0 \;\text{and} \; \alpha\neq 0$. Let $f(z)=q(z)e^{\alpha z}$, where $q(z)=\tilde{q}(z)e^\beta$.
Substituting $f(z)$ in equation \eqref{mde}, we have
\begin{equation*}
	q^2(z)e^{2\alpha z}(q'(z)e^{\alpha z}+e^{\alpha z}\alpha q(z))+U^*(z,f)=p_1e^{\gamma_1z}-p_2e^{\gamma_2z}
\end{equation*}
\begin{equation}\label{req}
	(q^2(z)q'(z)+\alpha q^3(z))e^{3\alpha z}+ \sum_{j=0}^{m}(d_jq_ke^{\alpha z+\tilde{\gamma}})-p_1e^{\gamma_1z}-p_2e^{\gamma_2z}=0,
\end{equation}
where $\tilde{\gamma}=\alpha k_j$ and $ q_k=q(z+k_j)$. 
Equation \eqref{req} reduces to 
\begin{equation}\label{wer}
	A(z)e^{3\alpha z}+B(z)e^{\alpha z}-p_1e^{\gamma_1 z}-p_2e^{\gamma_2 z}=0,
\end{equation}
 where $A(z)= (q^2(z)q'(z)+\alpha q^3(z)),$ and $B(z)=\sum_{j=0}^{m}d_jq_ke^{\tilde{\gamma}}$. Now, we consider the following cases. 
 \begin{enumerate}[(i)]
 	\item If we assume that $3\alpha z \neq \gamma_{i}z $ and $ \alpha z\neq \gamma_{i}z $ for any $i=1,2,$ then by  applying Lemma \ref{imple1} to equation \eqref{wer}, we obtain $p_{1}\equiv 0\equiv p_{2}$. This contradicts with the assumption that $p_1\; \text{and} \;p_2 $ are nonzero constants. 
 	\item If we assume that $3\alpha z \neq \gamma_{i}z $  for any $i=1,2,$ and $ \alpha_ z=\gamma_{i}z$, for some $i=1,2,$ say $\alpha z= \gamma _1z$, then equation \eqref{wer} becomes
 	\begin{align*}
 			A(z)e^{3\alpha z}+B(z)e^{\alpha z}(B(z)-p_1)-p_2e^{\gamma_2 z}=0.
 	\end{align*}
 	Now, applying Lemma \ref{imple1} to the above equation, we have $p_2\equiv 0$, which is not possible.
 	\item If we assume that $3\alpha z =\gamma_{i}z $  for some  $i=1,2,$ say $3\alpha z= \gamma _2z$ and $ \alpha z\neq \gamma_{i}z $  for any $i=1,2,$ then equation \eqref{wer} becomes
 	\begin{align*}
 	(A(z)-p_2)e^{3\alpha z}+B(z)e^{\alpha z}-p_1e^{\gamma_1 z}=0.
 	\end{align*}
  Then applying Lemma \ref{imple1} to the equation \eqref{wer}, we get $p_1\equiv 0$, a contradiction.
 	\item If we assume that $3\alpha z =\gamma_{i}z $ and $ \alpha z=\gamma_{i}z $  for some $i=1,2$,  say $3\alpha z= \gamma _1z$ and $\alpha z= \gamma _2z$,   then equation \eqref{wer} becomes
 	\begin{align*}
 		(A(z)-p_1)e^{3\alpha z}+(B(z)-p_2)e^{\alpha z}=0.
 	\end{align*}
 	Then applying Lemma \ref{imple1} to the equation \eqref{wer}, we get $A(z)=p_1$, and $B(z)=p_2$, which further implies $q(z)=c_1$, some constant $i.e.$ $q^2(z)q'(z)+\alpha q^3(z)$ gives $\alpha c_1^3=p_1$,  and $B(z)=\sum_{j=0}^{m}d_jc_1e^{\tilde{\gamma}}=p_2$. Hence, $f=c_1e^{\frac{\gamma_1z}{3}}$, where $\frac{\gamma_1}{\gamma_2}=3$ and $c_1=\left(\frac{3p_1}{\gamma_1}\right)^\frac{1}{3}$.\\ Also, we can get another  $f=c_2e^{\frac{\gamma_2z}{3}}$, where $c_2=\left(\frac{3p_2}{\gamma_2}\right)^\frac{1}{3}$ and $\frac{\gamma_2}{\gamma_1}=3$.
 \end{enumerate}
 Hence,  the result holds.
\end{proof}
\begin{proof}[\textbf{\underline{Proof of Theroem \ref{main6} }}] 
	We proceed with the same proof as done in Theroem \ref{main2}, then with  the help of  Lemma \ref{imple2} instead of Lemma \ref{imple}, we get $f(z)=q(z)e^{\alpha z}$. Substituting it in main equation, we have 
	\begin{equation}\label{req1}
		A(z)e^{3\alpha z}+ S(z)e^{\alpha z}-p_1e^{\gamma_1z}-p_2e^{\gamma_2z}=0,
	\end{equation} where $(q^2(z)q'(z)+\alpha q^3(z))=A(z)$, $\sum_{j=0}^{k}a_jP_j(q,q',q'',...,q^{(j)})=S(z)$.
	Now, proceed with the four cases as done in previous theorem.
	 \begin{enumerate}[(i)]
		\item If we assume that $3\alpha z \neq \gamma_{i}z $ and $ \alpha z\neq \gamma_{i}z $ for any $i=1,2,$ then by  applying Lemma \ref{imple1} to equation \eqref{req1}, we obtain $p_{1}\equiv 0\equiv p_{2}$. This contradicts with the assumption that $p_1\; \text{and} \;p_2 $ are nonzero constants. 
		\item If we assume that $3\alpha z \neq \gamma_{i}z $  for any $i=1,2$ and $ \alpha_ z=\gamma_{i}z$, for some $i=1,2,$ say $\alpha z= \gamma _1z$, then equation \eqref{req1} becomes
		\begin{align*}
			A(z)e^{3\alpha z}+B(z)e^{\alpha z}(S(z)-p_1)-p_2e^{\gamma_2 z}=0.
		\end{align*}
		Now, applying Lemma \ref{imple1} to the above equation, we have $p_2\equiv 0$, which is not possible.
		\item If we assume that $3\alpha z =\gamma_{i}z $  for some  $i=1,2,$ say $3\alpha z= \gamma _2z$ and $ \alpha z\neq \gamma_{i}z $  for any $i=1,2,$, then equation \eqref{req1} becomes
		\begin{align*}
			(A(z)-p_2)e^{3\alpha z}+S(z)e^{\alpha z}-p_1e^{\gamma_1 z}=0.
		\end{align*}
		Then applying Lemma \ref{imple1} to the above equation, we get $p_1\equiv 0$, a contradiction.
		\item If we assume that $3\alpha z =\gamma_{i}z $ and $ \alpha z=\gamma_{i}z $  for some $i=1,2$,  say $3\alpha z= \gamma _1z$ and $\alpha z= \gamma _2z$,   then equation \eqref{req1} becomes
		\begin{align*}
			(A(z)-p_1)e^{3\alpha z}+(S(z)-p_2)e^{\alpha z}=0.
		\end{align*}
		Then applying Lemma \ref{imple1} to the above equation,  we get $A(z)=p_1$, and $S(z)=p_2$, which further implies $q(z)=c_1$, some constant $i.e.$ $q^2(z)q'(z)+\alpha q^3(z)$ gives $\alpha c_1^3=p_1$,  and
		\begin{align*}
S(z)=&\sum_{j=0}^{k}a_jP_j(q,q',q'',...,q^{(j)})\\=&c_1\sum_{j=0}^{k}a_j\alpha^j=c_1\sum_{j=0}^{k}a_j\left(\frac{\gamma_1}{3}\right)^j=p_2.
	\end{align*}  
	Hence, $f=c_1e^{\frac{\gamma_1z}{3}}$, where $\frac{\gamma_1}{\gamma_2}=3$ and $c_1=\left(\frac{3p_1}{\gamma_1}\right)^\frac{1}{3}$.\\ Also, we can get another  $f=c_2e^{\frac{\gamma_2z}{3}}$, where $c_2=\left(\frac{3p_2}{\gamma_2}\right)^\frac{1}{3}$ and $\frac{\gamma_2}{\gamma_1}=3$.
	\end{enumerate}
	Here, we complete the proof.
\end{proof}
\begin{proof}[\textbf{\underline{Proof of Theroem \ref{t6} }}]
	 We will prove this result by contradiction. Write $Q(z,f):=Q$ and let $f(z)$ be a transcendental entire solution of finite order  of the equation \eqref{18} with $N\left(r,\frac{1}{f}\right)=S(r,f)$. Now, differentiating equation \eqref{18}, we get 
	 
	\begin{equation}\label{19}
		ff''+(f')^2+Q'=p_1\gamma_1 e^{\gamma_1z}+p_2\gamma_2 e^{\gamma_2z}.
	\end{equation}
	Eliminating $e^{\gamma_1z}$ from equations \eqref{18} and \eqref{19}, we obtain
	\begin{equation}\label{20}
		\gamma_1ff'+\gamma_1Q-ff''-f'^2 -Q'=(\gamma_1-\gamma_2)p_2e^{\gamma_2z}.
	\end{equation}
	Now, differentiating equation \eqref{20}, we get
	\begin{equation}\label{21}
		\gamma_1ff''+\gamma_1(f')^2+\gamma_1Q'-ff'''-f'f''-2f'f''-Q''=p_2\gamma_2(\gamma_1-\gamma_2)e^{\gamma_2z}.
	\end{equation}
	From equations \eqref{20} and \eqref{21}, eliminate $e^{\gamma_2z}$ to get
	\begin{equation}\label{eqqq3}
		\psi=K,
	\end{equation}
	where 
	\begin{equation}\label{23}
		\psi =-ff'''+( \gamma_1+ \gamma_2)ff''+(\gamma_1 + \gamma_2)f'^2+ \gamma_1 \gamma_2 ff'-3f'f'',
		\end{equation} and
		 $$K= Q''-(\gamma_1+ \gamma_2)Q'-\gamma_1 \gamma_2 Q.$$
	If $\psi\not\equiv 0 $, then using equation \eqref{eqqq3} together with  Lemma  Lemma  \ref{imple2} and \ref{imple 4}, we obtain
	\begin{align}\label{mrfeq31}
		m\left(r,\frac{\psi}{f}\right)&= S(r,f) \; \; \text{as} \;  m\left(r,\frac{K}{f}\right)=S(r,f), \\& \nonumber \; \text{and}\;\;\; m\left(r,\frac{\psi}{f^2}\right)=S(r,f).
	\end{align}
	Using equation \eqref{mrfeq31} and first fundamental theorem of Nevanlinna, we obtain
	\begin{align*}
		2T(r,f)=&m\left(r,\frac{1}{f^2}\right)+N\left(r,\frac{1}{f^2}\right)+O(1)\\&\leq m\left(r\frac{\psi}{f^2}\right)+m\left(r,\frac{1}{\psi}\right)+ 2N\left(r,\frac{1}{f}\right)+O(1)\\&\leq T(r,\psi)+S(r,f)=m(r,\psi)+S(r,f)\\&\leq m\left(r,\frac{\psi}{f}\right)+m(r,f)+S(r,f)\\&\leq T(r,f)+S(r,f).
	\end{align*}
	Hence, we get $T(r,f)=S(r,f)$, which is a contradiction.\\
	
	Now, we assume that $\psi\equiv 0$. Suppose that  $f(z)$ has infinitely many zeros, then in view of equation \eqref{eqqq3}, every zero of $f(z)$ has multiplicity at least $2$. Let $z_0$ be a zero of $f(z)$ of multiplicity $k\geq2$. In some small neighbourhood of $z_0$, we may write $f(z)=b_k(z-z_0)^k+b_{k+1}(z-z_0)^{k+1}+\cdots(b_k\neq0,k\geq2).$
	Substituting this expansion into equation \eqref{eqqq3} and comparing the lowest-order term in $(z-z_0)$, we obtain
	$$a_k^2k(k-1)(k-2)+3k^2(k-1)a_k^2=0,$$
	which leads to
	$$ka_k^2(3k(k-1)+(k-1)(k-2))=0.$$
	This gives $k=0$ or $k=1$, which yields a contradiction from  the assumption  $k\geq2$. Hence, $f(z)$ cannot have infinitely many zeros. Therefore, $f(z)$ possesses only finitely many zeros, and consequently admits the representation $f(z)=w(z)e^{lz}$ (as we did in Theorem \ref{main2}).
	Substituting $f(z)$ in equation \eqref{18}, we have
		\begin{equation}\label{res1}
		A(z)e^{2l z}+ L(z)e^{l z}-p_1e^{\gamma_1z}-p_2e^{\gamma_2z}=0,
	\end{equation} where $(w(z)w'(z)+lw^2(z))=A(z)$, $\sum_{j=0}^{k}a_jP_j(q,q',q'',...,q^{(j)})=L(z)$.
	 Now, we consider the following cases. For cases $(i), (ii),(iii)$, we obtain similar contradictions as in the Theorem \ref{main2}.
	\begin{enumerate}[(iv)]
	 \item If we assume that $2l z =\gamma_{i}z $ and $ l z=\gamma_{i}z $  for some $i=1,2$,  say $2l z= \gamma _1z$ and $l z= \gamma _2z$,  then equation \eqref{res1} becomes
		\begin{align*}
			(A(z)-p_1)e^{2l z}+(L(z)-p_2)e^{l z}=0.
		\end{align*}
		Then applying Lemma \ref{imple1} to the equation \eqref{res1}, we get $A(z)=p_1$, and $B(z)=p_2$, and $\gamma_1=2l$, and $\gamma_2=l$, a contradiction to the hypothesis. Hence, we have a contradiction for  both the cases $\psi\equiv 0$ and $\psi \not\equiv 0$.
		
	\end{enumerate}
	Hence, the result holds.
	\end{proof}
	
		\begin{proof}[\textbf{\underline{Proof of Theorem \ref{main}}}] 
		Suppose that $f$ is a transcendental meromorphic function satisfying equation \eqref{dedeg2}.  Since $f$ has only finitely many poles, it follows that $f'$ also has only finitely many poles. Therefore, using basic facts of Nevanlinna theory   together with Lemma  \ref{imple5} and Lemma  \ref{imple2} in equation \eqref{dedeg2}, we obtain,
		
		\begin{align*}
			T(r,f^{n})=&T\left(r,\frac{1}{f'}(P_{1}e^{\gamma_{1}(z)}+P_{2}e^{\gamma_{2}(z)}+P_{3}e^{\gamma_{3}(z)}-P(z,f))\right)\\
			&\leq T\left(r,\frac{1}{f'}\right)+T(r,P_{1}e^{\gamma_{1}(z)}+P_{2}e^{\gamma_{2}(z)}+P_{3}e^{\gamma_{3}(z)})+T(r,P(z,f))\\& \qquad  \qquad  \qquad \qquad \qquad \qquad \qquad \qquad \qquad   \qquad  \qquad +\log 2\\
			&\leq  T(r,f)+ T(r,P_{1}e^{\gamma_{1}(z)}+P_{2}e^{\gamma_{2}(z)}+P_{3}e^{\gamma_{3}(z)})+ m\left(r,\frac{P(z,f)}{f}f\right)\\&  \qquad \qquad \qquad \qquad  \qquad \qquad \qquad  \qquad +N(r,P(z,f)) +\log 2\\
			&\leq Rr^{k}+2T(r,f)+S(r,f),
		\end{align*}
		where $$R=\frac{\text{sum of the leading coefficients of} \ \gamma_{1}(z), \gamma_{2}(z)\ \text{and} \ \gamma_{3}(z)}{\pi}$$ and $$k=\max\{\deg \gamma_{1}(z),\deg \gamma_{2}(z),\deg \gamma_{3}(z)\}.$$
		Thus, we have $(n-2)T(r,f)\leq Br^{k}+S(r,f)$. Since $n\geq3$, it follows that $ T(r,f)=O(r^k)$ as $r \to \infty$ and hence $f(z)$ is a meromorphic function of finite order. \\
		It remains to prove the second assertion of the theorem. We proceed by contradiction. Assume $f(z)=w(z)e^{t(z)}$ is a solution of equation \eqref{dedeg2}, where $w(z)$ is a non-zero rational function and $t(z)$ is a non-constant polynomial, then L.H.S of the equation
		\begin{equation*}
			f^{n}(z)f'(z)+P(z,f)=P_{1}(z)e^{\gamma_{1}(z)}+P_{2}(z)e^{\gamma_{2}(z)}+P_{3}(z)e^{\gamma_{3}(z)}
		\end{equation*} 
		becomes \begin{align}
			&(w(z)e^{t(z)})^{n}(w(z)e^{t(z)})'+\sum_{i=1}^{s}(b_{i}(z)f^{(i)}(z))+{b_{0}}(z)\nonumber\\&=w^n(z)(e^{(n+1)t(z)})(w'(z)+t'(z)w(z))+\sum_{i=0}^{l}(b_{i}(z)(w(z)e^{t(z)})^{(i)})+{b_{0}}(z) .
		\end{align}
		This implies 
		\begin{align}\label{eqlast}
			e^{(n+1)t(z)}\left(w^{n}(z)w'(z)+w^{n+1}(z)t'(z)\right)+&S(z)e^{t(z)}+{b_{0}}(z)\\&\nonumber-P_{1}(z)e^{\gamma_{1}(z)}-P_{2}(z)e^{\gamma_{2}(z)}-P_{3}(z)e^{\gamma_{3}(z)}=0,
		\end{align}
		where $S(z)=b_{0}(z)w(z)+b_{1}(z)[w^{'}(z)+w(z)t^{'}(z)]+\cdots+b_{s}(z)[w^{s}(z)+\cdots+w(z)(t^{'}(z))^{s}]$ is a rational function. We proceed to analyze the following cases:
		
		\begin{enumerate}[(i)]
			\item If we assume that $t(z)-\gamma_{j}(z)\neq $ constant, and $(n+1)t(z)-\gamma_{j}(z)\neq $ constant, for any $j=1,2,3,$ then by  applying Lemma \ref{imple1} to equation \eqref{eqlast}, we obtain $P_{1}(z)\equiv 0\equiv P_{2}(z)=P_{3}(z)$. This contradicts with the assumption that $P_1(z), \;P_2(z), \; \text{and} \;P_3(z)$ are nonzero rational functions. 
			\item Assume that $t(z)-\gamma_{j}(z)=$ constant for some $j=1,2,3$, say $t(z)-\gamma_{1}(z)=$ constant, and $(n+1)t(z)-\gamma_{j}(z)\neq$ constant, for any $j=1,2,3$. Let $t(z)=c_{r}z^{r}+c_{r-1}z^{r-1}+\cdots+c_{0}$ and $\gamma_{1}(z)=c_{r}z^{r}+c_{r-1}z^{r-1}+\cdots+d_{0}$, then $t(z)-\gamma_{1}(z)=c_{0}-d_{0}$ and equation \eqref{eqlast} becomes
			\begin{align*}
				e^{(n+1)t(z)}\left(w^{n}(z)w'(z)+w^{n+1}(z)t'(z)\right)&+(S(z)e^{c_{0}}-P_{1}(z)e^{d_{0}})e^{\tilde{\gamma_{1}}(z)}+\\& {b_{0}}(z)-P_{2}(z)e^{\gamma_{2}(z)}-P_{3}(z)e^{\gamma_{3}(z)}=0,
			\end{align*}where $\tilde{\gamma_{1}}(z)=c_{r}z^{r}+c_{r-1}z^{r-1}+\cdots+c_{1}z.$
			Now, applying Lemma \ref{imple1} to the above equation, we have $P_{2}(z)\equiv 0\equiv P_{3}(z)$, which is not possible.
			\item If $t(z)-\gamma_{j}(z)\neq$ constant for any $j=1,2,3$,  and $(n+1)t(z)-\gamma_{k}(z)=$ constant, for some $k$, say $nq(z)-\gamma_{2}(z)=$ constant, then applying Lemma \ref{imple1} to  the equation \eqref{eqlast}, we arrive at the same contradiction as in the preceding case.
			\item If $t(z)- \gamma_{j}(z)=$ constant for some $j=1,2,3$, say $t(z)-\gamma_{2}(z)$, and $(n+1)t(z)- \gamma_{j}(z)=$ constant, for some $j=1,2,3$, say $(n+1)t(z)-\gamma_{1}(z)=$ constant, then applying Lemma \ref{imple1} to the equation \eqref{eqlast}, we get $P_3(z)\equiv0$, a contradiction.
			
		\end{enumerate}
		This completes the proof.
	\end{proof}

\end{document}